\newtheorem{theorem}{Theorem}
\newtheorem{proposition}[theorem]{Proposition}
\newtheorem{lemma}[theorem]{Lemma}
\newtheorem{corollary}[theorem]{Corollary}
\theoremstyle{definition}
\newtheorem{definition}[theorem]{Definition}
\theoremstyle{remark}
\newtheorem{remark}[theorem]{Remark}
\def\R{\mathbb{R}}
\def\N{\mathbb{N}}
\def\pscal#1#2{\left\langle#1,\,#2\right\rangle}
\long\def\delete#1{}
\DeclareMathOperator{\Cut}{\overline \Sigma}  
\DeclareMathOperator{\cent}{C}
\DeclareMathOperator{\high}{M}
\font\maius=cmcsc10 scaled1200
\begin{document}

%%%%%%%%%%%%%%%%%%%%%%%%%%%%%%%%%%%%%%%%%%%%%%%%%%%%%%%%%%
 %amsart format
\title[Proximally smooth sets]%
{On the characterization of some classes \\ of proximally smooth sets}%
\author[G.~Crasta, I.~Fragal\`a]{Graziano Crasta,  Ilaria Fragal\`a}
\address[Graziano Crasta]{Dipartimento di Matematica ``G.\ Castelnuovo'', Univ.\ di Roma I\\
P.le A.\ Moro 2 -- 00185 Roma (Italy)}
\email{crasta@mat.uniroma1.it}

\address[Ilaria Fragal\`a]{
Dipartimento di Matematica, Politecnico\\
Piazza Leonardo da Vinci, 32 --20133 Milano (Italy)
}
\email{ilaria.fragala@polimi.it}

\keywords{distance function, proximal smoothness, positive reach, 
cut locus, central set, skeleton, medial axis}
\subjclass[2010]{Primary 26B25, Secondary 26B05, 53A05}

%35N25 Overdetermined boundary value problems
%35J70 Degenerate elliptic equations
%49K20    Problems involving partial differential equations
%49K30    Optimal solutions belonging to restricted classes
%53A07    Higher-dimensional and -codimensional surfaces in Euclidean $n$-space

\date{April 4, 2013}

\begin{abstract} We provide a complete characterization of closed sets with empty interior and positive reach in $\R^2$. 
As a consequence, we characterize open bounded domains in $\R^2$ whose high ridge and cut locus agree, and hence  $C^1$ planar domains whose normal distance to the cut locus is constant along the boundary.  The latter results extends to convex domains in $\R^n$. \end{abstract}

\maketitle

\section{Introduction}\label{intro}

A nonempty closed subset $S$ of $\R^n$ is called {\it proximally smooth}, or with {\it positive reach}, if for every point $x$ belonging to an open tubular neighborhood outside $S$ there is a unique minimizer of the distance function from $x$ to $S$. 

These sets were introduced in 1959 in the seminal paper \cite{Fed1} by Federer, 
who also proved many of their most relevant properties, in particular the validity of a tube formula, which expresses the Lebesgue measure of a sufficiently small $r$-parallel neighborhood of a set with positive reach in $\R ^n$ as a polynomial in $r$ of degree $n$. 
%whose coefficients can be expressed through suitable {\it curvature measures}. Such result extends classical  formulae %due to Steiner and Weyl, which hold respectively for convex sets and for $C ^2$ manifolds: in these cases, the %coefficients of the volume polynomial expansion involve respectively the quermassintegrals and the principal curvatures %of $S$.

The concept of proximal smoothness can in fact be located at the crossroad of different areas, such as Geometric Measure Theory, Convex Geometry, Nonsmooth Analysis, Differential Geometry.  Since Federer, it has been investigated and developed in various ways. Related research directions include 
generalized Steiner-type formulae, tubular neighborhoods, and curvature measures
\cite{CoHu,Hu,HuLaWe,RaZa,Za};
connections with Lipschitz functions, semi-concave functions, and lower-$C^2$ functions 
\cite{CSW,Fu};
proximal smoothness in abstract frameworks, such as Banach spaces or 
Riemannian manifolds \cite{Ba,BeThZl}; 
applications to nonlinear control systems and differential inclusions
\cite{CaSib,CoMaWo,CoNg}. 

More comprehensive accounts of results in this area and related bibliography can be found in the surveys papers \cite{CoTh,Tha}.

\medskip
In this paper we are concerned with the following question: 

\bigskip\centerline{\it (*)  Which is the geometry of a closed set $S \subset \R^2$ with positive reach and empty interior? }

\medskip
As far as we are aware, no previous contributions are available in this respect in the literature. In particular, it is worth advertising that one cannot apply the several existing results which allow to retrieve regularity information on a set starting from the regularity of its distance function (possibly squared or signed). Indeed, some of these results are classical and some others are more recent (see e.g. \cite{Amb2,BeMaNo,DZa,Gi}), but in any case they rely on some regularity assumption on the distance  {\it up to} the involved set. In spite, by definition, the distance function from a set $S$ of positive reach is required to be differentiable just on the set of points where it is sufficiently small and strictly positive, thus not necessarily on $S$ itself  (see Definition \ref{d:psm}). 

\smallskip
Our main results provide a complete answer to question (*): 
each connected component of $S$ is either a singleton or  a manifold of class $C ^ {1,1}$ (see 
Theorem \ref{t:c11}); in case the distance from $S$  is at least  $C^2$ in an open neighborhood of $S$,  then such manifolds have no boundary and are of class $C ^2$ (see 
Theorem \ref{t:c2}); moreover,  in case the distance from $S$ goes  beyond the $C ^2$ threshold, $S$ gains the same regularity (see Remark \ref{r:comments}). 

\smallskip
As a by-product, we are able to answer the following related question:

\medskip
\centerline{\it (**) Which is the geometry of a set $\Omega \subset \R^2$ whose high ridge and cut locus agree?}

\medskip

Recall that, given an open bounded domain $\Omega\subset \R ^2$, the {\it high ridge} is the set of points where the distance function from $\partial \Omega$ attains its maximum over $\overline \Omega$, while the {\it cut locus} is the closure in $\overline \Omega$ of the so-called {\it skeleton}, namely of the sets of points in $\Omega$ which admit multiple closest points on $\partial \Omega$;  recall also that the {\it central set} is formed by the centers of the maximal disks contained into $\Omega$.  
We refer to Section \ref{main} for the precise formulation of these definitions.  All these sets, which have each one its own role in the geometry of the distance function from the boundary, have been widely investigated in the literature, often with a non-uniform  terminology.  A miscellaneous collection of related references, without any attempt of completeness, is \cite{ACKS,Al,BiHa,CMf,Frem,FW,Lie,MM}. 
It must be added that recently the singular set of the distance function has raised an increasing interest also in applied domains, such as computer science and visual reconstruction, and this is especially true for the central set (often named medial axis in this context), see e.g.\ 
\cite{Bishop2012,CCM,Degen2004,Wol}
and Remark~\ref{r:MAT} below.

%\rem{inserire ulteriori referenze (Blum) e discorso sulla MAT? }

\smallskip
To pinpoint the link between questions (*) and (**), one has to observe that, if the cut locus and high ridge of a domain $\Omega$ coincide, they can be identified with a proximally smooth set with empty interior. As a consequence, the answer to question (**) is: $\Omega$ is the outer parallel neighborhood of a $C ^ {1,1}$ manifold; in particular, if $\Omega$ is assumed to be of class $C ^2$ and simply connected, it must be necessarily a disk (see Theorem \ref{forBK}). 

\smallskip
We remark that our answer to question (**) solves also the problem of characterizing domains of class $C ^1$ whose {\it normal distance} to the cut locus is constant along the boundary (see Corollary \ref{cor:lambda}).  Intuitively, the normal distance of a point $y \in \partial \Omega$ measures how far one can enter into $\Omega$ starting at $y$ and moving along the direction of the inner normal before hitting the cut locus; the precise definition is recalled in Section \ref{main}. This notion has been considered from different points of views: in \cite{CCG,IT,LN}
the regularity of the normal distance under different requirements on the boundary has been investigated, along with some applications to Hamilton-Jacobi equations and to PDEs related with granular matter theory;  in \cite{Ce1,CFGa,CFGb,CFGd} the normal distance has been exploited in order to study the minimizing properties of the so-called web functions.  
Let us also mention that, in a previous paper, we proved a roundedness criterion based on the constancy along the boundary of a $C ^2$ domain of a certain function, depending on the normal distance and on the principal curvatures, see \cite[Thm.~1]{CFa}. If compared to such result, the roundedness criterion stated in  Corollary \ref{cor:lambda} of the present paper has the advantages of applying to any $C ^1$ domain, and  of involving uniquely the normal distance; moreover, it is obtained through completely different techniques, of more geometrical nature. 

\smallskip
We conclude by observing that clearly questions (*) and (**) can be raised also in space dimensions higher than $2$ (or even in a Riemannian manifold), but they seem much more difficult to solve.  Nonetheless, concerning question (**), we are able to deal with domains in the $n$-dimensional Euclidean space, under the severe restriction that they are convex (see Theorem \ref{t:nconvex}). Removing this restriction remains by now an open problem. 

\smallskip
We defer to a companion paper \cite{CFc} some applications of the geometric results contained in this manuscript to PDEs, specifically to boundary value problems involving the infinity-Laplacian operator. 

\smallskip
The outline of the paper is the following: hereafter we fix some notation; in Section \ref{main} we state the main results;  
in Section \ref{secnot} we provide some background material; Section \ref{secanal} is devoted to some intermediate key results, which prepare the proofs given in Section \ref{secproofs}.

\bigskip
{\maius Acknowledgments.}
The authors would like to thank Piermarco Cannarsa
for pointing out the paper \cite{ACKS}.

\bigskip
{\maius Notation.}  
The standard scalar product of two vectors $x,y\in\R^n$
is denoted by $\pscal{x}{y}$, and $|x|$ stands for the
Euclidean norm of $x\in\R^n$.
Given an open bounded domain $\Omega\subset \R ^n$, we denote
by $|\Omega|$ and $|\partial\Omega|$ respectively its $n$-dimensional Lebesgue measure 
and the $(n-1)$-dimensional Hausdorff measure of its boundary. We set $\Omega ^c:=\R^ n \setminus \Omega$.

We call $B _r (p)$ the open disk of center $p$ and radius $r$, and $\overline B _r (p)$ its closure. 
We indicate by $[p,q]$ the line segment with extremes $p$ and $q$. 

As customary, we say that a function is of class $C^{k, \alpha}$ when all its derivatives up to order $k$ satisfy a H\"older condition of exponent $\alpha$, and that it is of class $C ^ \omega$ when it is analytic.

By saying that an open set $\Omega\subset\R^n$
(or, equivalently, its closure
$\overline{\Omega}$ or its boundary $\partial \Omega$)
is of class $C^k$, $k\in\N$,
we mean that, for every point $x_0\in\partial \Omega$
there exists a neighborhood $U$ of $x_0$ and a bijective
map $\psi\colon B_1(0)\to U$ such that
$\psi\in C^k(B_1(0))$, $\psi^{-1}\in C^k(U)$,
$\psi(B_1(0)\cap \{x_n>0\}) = \Omega\cap U$,
$\psi(B_1(0)\cap\{x_n=0\}) = \partial\Omega\cap U$.
An analogous definition holds with $C^{k,\alpha}$, $C^{\infty}$,
$C^{\omega}$ instead of $C^k$.

Given a closed set $S\subset \R^n$, we denote by $d_S$ the distance function from $S$, 
defined by
%\begin{equation}\label{dS}
\[
d_{S}(x) := \min_{y\in S} |x-y|,\quad x\in\overline{\Omega}\ ,
\]
%\end{equation}
where $|\cdot |$ is the Euclidean norm in $\R ^n$, and by 
$\pi _S$ the projection map onto $S$, namely, for every $x \in \R ^n$, we call $\pi_{S} (x)$ the set of points $y\in S$ such that 
\[
|x - y| = d_{S} (x)  \, .
\]
Whenever $x$ has a unique projection onto $S$,
with a minor abuse of notation we shall identify 
the set $\pi_S(x)$ with its unique element.
 
Moreover, for $r>0$ we denote by $S_r$ the $r$-tubular neighborhood of $S$: 
\[
S_r := \big \{ x \in \R^n \ :\ d_S (x) < r \big \}\ .
\]

%%%%%%%%%%%%%%%%%%%%%%%%%%%%%%%%%%%%%%%%%%%%%%%%%%%%%%%%%
\section{Main results}\label{main}

\begin{definition} 
\label{d:psm}
We say that a set $S \subset \R ^n$ is {\it proximally $C ^k$ (of radius $r_S$)} if it is nonempty, closed, and there exists $r_S>0$ such that
the distance function $d_S$ is of class $C ^ k$ in the set $\{ x \in \R^n \ :\ 0 < d _S (x) < r_S \}$. 
\end{definition}

Notice that proximally $C^1$ sets according to the above definition correspond to sets which in the literature are usually named  proximally smooth, or with positive reach, as discussed in the Introduction. 

\medskip
Our main results are the following characterizations of planar sets $S$ which satisfy one of the following conditions:

\medskip
\begin{itemize}
\item[(H1)]
$S\subset\R^2$ is connected, with empty interior, proximally $C ^1$; 

\medskip
\item[(H2)]
$S\subset\R^2$ is connected, with empty interior, proximally $C ^2$. 
\end{itemize}

\begin{theorem}
\label{t:c11}
Assume that $S\subset \R ^2$ satisfies $(H1)$. 

Then  $S$ is either a singleton, or
a $1$-dimensional manifold
of class $C^{1,1}$.
%In the last case, the curvature $\kappa_S$ of $S$
%must satisfy
%\[
%\int_{\Gamma} |\kappa_S|\, d\mathcal{H}^1
%< \frac{1}{r}\, \mathcal{H}^1(\Gamma)
%\]
%for every arc $\Gamma\subseteq S$ of positive length.
\end{theorem}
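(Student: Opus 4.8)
The plan is to extract, at each point $x_0\in S$, a disk of fixed radius that touches $S$ only at $x_0$, and then glue these osculating disks to produce a $C^{1,1}$ parametrization; the flatness of $S$ (empty interior) is what forces the picture to be one-dimensional. First I would observe that, since $S$ is proximally $C^1$ of some radius $r_S$, every point $p$ at distance $0<d_S(p)<r_S$ from $S$ has a unique projection $\pi_S(p)\in S$, and the classical properties of sets of positive reach (Federer) give that the open ball $B_{d_S(p)}(p)$ is disjoint from $S$ while its boundary sphere meets $S$ exactly at $\pi_S(p)$. Because $S$ has empty interior in $\R^2$, for each $x_0\in S$ there are points arbitrarily close to $x_0$ lying outside $S$; pushing such points along the segment towards $x_0$ and using the positive-reach property, I would produce, for every $x_0\in S$, at least one unit vector $\nu$ with $B_{r}(x_0+r\nu)\cap S=\emptyset$ for all $r<r_S$ — an \emph{exterior touching disk} of radius $r_S$ at $x_0$. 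The first real step is to show there are in fact two such disks at $x_0$, lying on ``opposite sides'', i.e.\ with antipodal centers $x_0\pm r_S\nu$ (up to sign of $\nu$); this is where empty interior is used decisively, since in a neighborhood of $x_0$ the complement of $S$ is dense and, by a connectedness/separation argument in the plane, $S$ locally cannot block a whole neighborhood, forcing a touching disk on each side of some line through $x_0$.

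Granting the two-sided touching disks of uniform radius $r_S$, I would next derive that $S$ is locally a graph and that the touching direction $\nu(x_0)$ varies Lipschitz-continuously. Two exterior balls $\overline B_{r_S}(x_0+r_S\nu)$ and $\overline B_{r_S}(x_0-r_S\nu)$ whose interiors avoid $S$ confine $S$, near $x_0$, to the lens-shaped region between the two spheres; this is the standard ``interior/exterior ball condition'' and it yields immediately that $S$ near $x_0$ is the graph of a function $\varphi$ over the tangent line $x_0+\R\,\tau$ (with $\tau\perp\nu$) satisfying the quadratic pinching $|\varphi(t)|\le C t^2$, hence $\varphi$ is differentiable at $0$ with $\varphi'(0)=0$. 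Applying the same reasoning at each nearby point of $S$ shows $\varphi$ is differentiable everywhere with derivative controlled by the turning of the normals; the uniform two-sided ball condition then gives the Lipschitz estimate $|\nu(x)-\nu(y)|\le C|x-y|$, equivalently $\varphi\in C^{1,1}$. Thus a neighborhood of each non-isolated point of $S$ is a $C^{1,1}$ one-dimensional graph. To globalize, I would note that the set of points of $S$ admitting such a $C^{1,1}$ graph neighborhood is open in $S$; running along $S$ and patching graphs (the normals agree on overlaps by uniqueness of the tangent line) produces a $C^{1,1}$ one-manifold structure, possibly with boundary, unless $S$ is a single point.

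The main obstacle I anticipate is the very first dichotomy: proving that each point of $S$ is either isolated or carries \emph{two} opposite exterior touching disks of radius $r_S$, rather than just one. One-sided touching alone is compatible with $S$ having a ``free boundary'' sticking into the complement in a non-rectifiable way, so the empty-interior hypothesis must be converted into a genuine two-sided statement. I would handle this by a planar topological argument: fix $x_0\in S$ non-isolated, take a small disk $B_\rho(x_0)$; since $|S|=0$ the open set $B_\rho(x_0)\setminus S$ is dense, and if every exterior touching disk at points near $x_0$ pointed into one half-plane, then the union of these disks would cover a full neighborhood of $x_0$ on that side while $S$ itself, being connected and passing through $x_0$, would have to cross to the other side, contradicting that the disks are $S$-free — more carefully, one localizes using that $d_S$ is $C^1$ near $x_0$ off $S$, so $\nabla d_S$ extends continuously and its ``$\pm$ limits'' at $x_0$ furnish the two directions. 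A secondary technical point is the passage from ``differentiable with $\varphi'=0$ at each point'' to genuine $C^{1,1}$ regularity, i.e.\ ruling out that the graph neighborhoods shrink to zero size along $S$; this is controlled by the fact that the radius $r_S$ is uniform, so all the quadratic pinching constants and graph radii are uniform too, and no degeneration occurs. Once these two points are secured, the rest is the routine gluing described above, yielding exactly the alternative in the statement of Theorem~\ref{t:c11}.
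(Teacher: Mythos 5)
Your overall strategy (two-sided uniform touching disks at every point $\Rightarrow$ local graph with quadratic pinching $\Rightarrow$ $C^{1,1}$) would indeed finish the proof if its first step were available, but that first step is precisely the hard core of the theorem and your sketch for it does not hold up. You propose to obtain, at each non-isolated point $x_0\in S$, \emph{two antipodal} exterior touching disks, by arguing that ``$\nabla d_S$ extends continuously'' to $S$ and that its limits furnish the two directions, supplemented by a covering/separation heuristic. Neither works as stated: $d_S$ is only assumed $C^1$ on $\{0<d_S<r_S\}$, and in the generic situation (e.g.\ $S$ a segment) $\nabla d_S$ has a jump across $S$, so there is no continuous extension and the ``$\pm$ limits'' are exactly what must be constructed, not a given. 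Moreover, the information you actually extract at points of $S$ in your first paragraph is of exterior-sphere type, and Remark~\ref{r:comments}~(iv) of the paper stresses that such information alone is insufficient (graph of $|x|$, two tangent circles): any correct argument must exploit proximal smoothness quantitatively, via the uniqueness and the Lipschitz bound for $\pi_S$ on $\{0<d_S<r\}$. That is what the paper does in Section~\ref{secanal}: it analyzes the contact set $C_r(p)=\partial B_r(p)\cap\partial S_r$ and proves (Lemma~\ref{l:ang}, Lemma~\ref{l:conv}, Proposition~\ref{p:reg}) that $C_r(p)$ is either a pair of antipodal points or a closed semicircumference, the latter via a genuinely delicate contradiction (the arc-propagation argument using the Lipschitz constant of $\pi_S$ and the curvature bound on $\partial S_r$, then the regions $T_\pm$ and the point $p_\lambda$ with $d_{\partial S_r}(p_\lambda)>r$). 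Your proposal acknowledges this obstacle but does not supply an argument of comparable substance, so the dichotomy you need remains unproved. Note also that the correct dichotomy includes the semicircumference case, which produces manifold \emph{boundary} points; your ``two antipodal disks at every non-isolated point'' phrasing silently excludes it.

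A secondary gap: even granting the two-sided ball condition, your passage to ``$S$ is locally a graph'' needs $S$ to be locally connected (the lens confinement gives at most one point over each abscissa, but not that an entire interval of abscissas is attained), and connectedness of $S$ alone does not give this. The paper avoids the issue by parametrizing $S$ extrinsically as $\eta(s)=\gamma(s)+r\nu(s)$, where $\gamma$ parametrizes $\partial S_r$, which is already known to be $C^{1,1}$ by Proposition~\ref{l:prox}; local connectedness and the Lipschitz graph property are then read off from $\eta'=\mu\gamma'$ with $\mu\geq 1/2$, and the final upgrade to $C^{1,1}$ uses that epigraph and hypograph of the local graph are proximally smooth (lower- and upper-$C^2$, hence semiconvex and semiconcave). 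If you want to pursue your intrinsic route, you must both prove the antipodal touching-disk dichotomy using the Lipschitz continuity of $\pi_S$ (not just exterior spheres) and add the local-connectedness argument; as written, the proposal has a genuine gap at its central step.
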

%%%%%%%%%%%%%
% Per la stima cfr. Thm. 9 in Feiszli
%%%%%%%%%%%%%

\begin{theorem}
\label{t:c2}
Assume that $S\subset \R ^2$ satisfies $(H2)$.

Then $S$ is either a singleton, or a
$1$-dimensional manifold without boundary of class $C^2$. 
\end{theorem}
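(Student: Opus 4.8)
The plan is to bootstrap from Theorem \ref{t:c11}. Since $(H2)$ implies $(H1)$, we already know that $S$ is either a singleton—in which case we are done—or a $1$-dimensional manifold of class $C^{1,1}$, possibly with boundary. So two things remain to be shown: that $S$ has no boundary points, and that the $C^{1,1}$ regularity can be upgraded to $C^2$. I would handle the boundary issue first. Suppose, for contradiction, that $p\in S$ is a boundary point of the manifold, i.e.\ $S$ near $p$ looks like a $C^{1,1}$ arc terminating at $p$. Let $T$ be the unit tangent to this arc at $p$ and consider points $x$ of the form $p+tN$ where $N$ is any unit vector orthogonal to $T$; more generally consider the normal cone to $S$ at the endpoint $p$, which (since the arc ends there) is a half-space's worth of directions $v$ with $\pscal{v}{T}\le 0$ together with the full circle of directions orthogonal to $T$—that is, all $v$ with $\pscal{v}{T}\le 0$. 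For small $t>0$, the point $x=p+tv$ with $\pscal{v}{T}=0$ has $p$ as its unique closest point on $S$ (using positive reach and the $C^{1,1}$ bound on the curvature of the arc), so $d_S$ is differentiable there with $\nabla d_S(x)=v$. Letting $t\to 0^+$ along two different such directions $v_1\ne v_2$ (both orthogonal to $T$) forces $\nabla d_S$ to have two distinct limits as one approaches $p$ through the region $0<d_S<r_S$, contradicting the continuity of $\nabla d_S$ guaranteed by $d_S\in C^1$ on $\{0<d_S<r_S\}$. Hence $S$ has no boundary: it is a $C^{1,1}$ manifold without boundary, i.e.\ (being connected and $1$-dimensional) a closed arc which is either a line-like unbounded curve or a topological circle.

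Next I would upgrade $C^{1,1}$ to $C^2$. Parametrize a piece of $S$ by arclength, $\gamma\colon I\to\R^2$, with $\gamma\in C^{1,1}$, so the curvature $\kappa=\pscal{\ddot\gamma}{N}$ exists a.e.\ and is bounded. The normal line field is well defined and Lipschitz, so for small $r$ the map $\Phi(s,t):=\gamma(s)+tN(s)$, $(s,t)\in I\times(-r_S,r_S)$, is a bi-Lipschitz parametrization of a tubular neighborhood of $S$ minus $S$ (one-sided or two-sided according to whether $S$ bounds a domain), and on this neighborhood one has the classical identity $d_S(\Phi(s,t))=|t|$ and $\nabla d_S(\Phi(s,t))=\operatorname{sgn}(t)\,N(s)$. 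The key point is that $d_S\in C^2$ on $\{0<d_S<r_S\}$ means $\nabla d_S$ is $C^1$ there, hence $N\circ(\text{first component of }\Phi^{-1})$ is $C^1$ on that set. One then reads off, by computing the Jacobian of $\Phi$ (whose determinant is $1-t\kappa(s)$, defined a.e.), that the derivative of $s\mapsto N(s)$ exists and is continuous—equivalently, that $\kappa$ has a continuous representative—because the $C^1$ regularity of $\nabla d_S$ transported through the (bi-Lipschitz, and in fact $C^1$ once we know this) change of variables forces $\dot N$ to be continuous in $s$. Concretely: differentiate the relation $\nabla d_S(\gamma(s)+tN(s))=N(s)$ (valid for fixed small $t\ne 0$) with respect to $s$; the left side is $D^2 d_S\cdot(\dot\gamma(s)+t\dot N(s))$, a continuous function of $s$, and since $\dot\gamma=T$ is continuous and the map $s\mapsto \dot\gamma(s)+t\dot N(s)=(1-t\kappa(s))T(s)$ must therefore be continuous with $1-t\kappa(s)$ bounded away from $0$, we conclude $\kappa$ is continuous, i.e.\ $\gamma\in C^2$. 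Thus $S$ is a $1$-dimensional manifold without boundary of class $C^2$.

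The main obstacle I anticipate is the regularity bootstrap step: making rigorous the transfer of $C^2$ regularity of $d_S$ (an a priori statement only on the open set $\{0<d_S<r_S\}$, with no information on $S$ itself) to $C^2$ regularity of the arclength parametrization of $S$. The delicate point is that the change of variables $\Phi$ is only known to be $C^{1,1}$ at the outset, so one cannot naively invoke the inverse/implicit function theorem in $C^2$; instead one must argue at the level of the a.e.-defined second derivatives and the Jacobian factor $1-t\kappa(s)$, showing that continuity of $D^2 d_S$ on the tube, combined with the explicit form of $\Phi$ and the nonvanishing of its Jacobian, pins down a continuous representative for $\kappa$. Handling the two-sided versus one-sided cases (whether or not $S$ locally separates the plane, which by emptiness of interior and connectedness it may or may not do globally) requires a little care but is routine once the endpoint argument has ruled out boundary points. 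The analytic heart—equating the mixed partials of $d_S\circ\Phi$ computed in the two coordinate systems—is the step where I expect the real work to lie, and it is plausibly deferred in the paper to one of the "intermediate key results" of Section \ref{secanal}.
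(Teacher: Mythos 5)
Your Step 1 (ruling out boundary points) contains a genuine gap: the claimed contradiction is not a contradiction. The point $p$ lies in $S$, so $d_S(p)=0$ and $p$ does \emph{not} belong to the open set $\{0<d_S<r_S\}$ where $d_S$ is assumed to be regular; continuity of $\nabla d_S$ on that open set places no constraint whatsoever on the limits of $\nabla d_S$ as one approaches $p\in S$ along different rays. Indeed, distinct limits occur for every admissible $S$: at any interior point of a curve the gradient tends to $N$ from one side and $-N$ from the other, and when $S$ is a singleton (which the theorem allows) $\nabla d_S$ attains every unit vector arbitrarily close to $p$. More decisively, your argument uses only the $C^1$ hypothesis (H1), but under (H1) manifolds with boundary genuinely occur: $S=[0,1]\times\{0\}$ is convex, hence proximally $C^1$ of infinite radius, and is a $C^{1,1}$ manifold with boundary (it fails to be proximally $C^2$ only because the Hessian of $d_S$ jumps across the normal line at each endpoint). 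So no first-order argument can rule out boundary points; the obstruction is second order. This is exactly how the paper proceeds: at a boundary point $p$ the contact set $C_r(p)$ is a closed semicircle of $\partial S_r$ (Proposition \ref{p:reg}), and since $S$ is proximally $C^2$ the curve $\partial S_r$ is $C^2$ with continuous curvature; on the semicircular cap the curvature equals $1/r$, while the requirement that the second contact point $\gamma(s)+2r\nu(s)$ stay on the correct side of the cap forces $\kappa(0)\le 1/(2r)$ at the junction, a contradiction. Your proof as written does not invoke (H2) at all in this step, which is the telltale sign it cannot work.

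Your Step 2 (upgrading $C^{1,1}$ to $C^2$) is a different route from the paper's and is essentially viable: you differentiate the identity $\nabla d_S(\gamma(s)+tN(s))=N(s)$ a.e.\ in $s$ for fixed small $t\neq 0$ and solve for $\kappa(s)$ in terms of the continuous quantity $\left\langle D^2 d_S(\Phi(s,t))T(s),T(s)\right\rangle$, with the factor $1-t\kappa(s)$ bounded away from zero, thereby exhibiting a continuous representative of $\kappa$ and concluding $\gamma\in C^2$. To make this rigorous you still need to justify that the normal segment from $\gamma(s)$ projects back onto $\gamma(s)$ (this is where the contact-set analysis of Section \ref{secanal}, i.e.\ $C_r(p)$ consisting of two antipodal points, enters) and to handle the a.e.\ chain rule for the $C^{1,1}$ change of variables, which you correctly flag. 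The paper avoids differentiating the not-yet-$C^2$ parametrization of $S$ altogether: it takes one boundary curve $\Gamma\subset\partial S_r$, which is genuinely $C^2$ by Proposition \ref{l:prox}(v), shows every point of the intermediate region has a unique projection onto $\Gamma$, deduces $d_\Gamma\in C^2$ there via the inverse-function-theorem argument of \cite[Thm.~6.10]{CMf}, and recovers $S$ as the level set $\{d_\Gamma=r\}$ by the Implicit Function Theorem; this also yields the higher-regularity variants of Remark \ref{r:comments}(iii) for free, which your a.e.\ argument would need extra work to reach.
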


\smallskip
\begin{remark}\label{r:comments}
(i) Clearly,  if the assumption $S$ connected is removed from $(H1)$ and $(H2)$, Theorems \ref{t:c11} and \ref{t:c2} can be applied to characterize each connected component of $S$. 

\smallskip
(ii) If the assumption $S$ bounded is added to $(H2)$, Theorem \ref{t:c2} allows to conclude that $S$ is a regular simple closed curve of class $C ^2$.

\smallskip
(iii) If the regularity requirement in condition $(H2)$ is strengthened by asking that $S$ satisfies Definition \ref{d:psm} with $C ^2$ replaced either by $C ^ {k, \alpha}$, for some $k \geq 2$ and $\alpha \in [0,1]$, 
or by $C ^ \infty$, or by $C ^ \omega$, 
then the thesis of Theorem \ref{t:c2} can be strengthened accordingly, namely the manifold $S$ turns out to be respectively of class $C ^ {k, \alpha}$, $C ^ \infty$,  or $C ^ \omega$ (cf.\ Remark \ref{r:inspection}). 

\smallskip
(iv) It is a natural question to ask whether Theorem \ref{t:c11} still holds if the condition $S$ proximally $C ^1$ is weakened into an {\it exterior sphere condition}. Namely, if $S$ is proximally $C^1$ of radius $r_S$, for every $r \in (0, r_S)$, every $x\in S$ and every unit vector $\zeta$ such that $x \in \pi_S (x+r\zeta)$, the ball of radius $r$ centered at $x + r \zeta$ does not intersect $S$ (see e.g. \cite[Thm.~4.1 (d)]{CSW}).  
At least without any additional assumption on $S$, the converse implication is not true:
the exterior sphere condition  is strictly weaker than proximal smoothness (see \cite{NoStTa}), and it turns out that it is not sufficient to guarantee the validity of Theorem \ref{t:c11}. Examples of sets which satisfy an exterior sphere condition but are not a manifold of class $C^{1,1}$, or not a manifold at all, can be easily constructed: think for instance to the graph of the function $|x|$, or to the union of two mutually tangent circumferences. 
\end{remark}

We now turn attention to the consequences of Theorems \ref{t:c11} and \ref{t:c2} on the geometry of planar domains
whose high ridge and cut locus coincide. 
We are going to see that such domains admit a simple geometrical characterization, as tubular neighborhoods of a $C ^ {1,1}$ manifold; moreover
such characterization turns into a symmetry statement in case the involved domain is $C ^2$ and simply connected. 

In order to state these results
more precisely, and since the terminology adopted in this respect in the literature is not uniform,
let us fix some notation concerning the geometry of the distance function from the boundary.

\begin{definition}\label{maximal} 
Let $\Omega \subset \R^n$ be an open bounded domain. 

\medskip
--  $\Sigma (\Omega)$:= the {\it skeleton} of $\Omega$ is the singular set of $d _{\partial \Omega}$ (i.e., the set of points $x \in \Omega$ such that $d _{\partial \Omega}$ is not differentiable at $x$, or equivalently such that $\pi _{\partial \Omega}(x)$ is not a singleton);

\medskip
-- $\Cut(\Omega)$:= the {\it cut locus} of $\Omega$ is the closure of $\Sigma (\Omega)$ in $\overline \Omega$;

\medskip

-- $\cent(\Omega)$:= the {\it central set} of $\Omega$ is the set
of the centers of all maximal balls contained into $\Omega$. (We  say that an open ball $B_r (p)$ is a {\it maximal ball} contained into $\Omega$ if 
$B_r (p) \subset \Omega$ and there does not exist any other open ball strictly containing $B_r (p)$ which is still contained into $\Omega$.) 

\medskip

-- $\high (\Omega)$:= the {\it high ridge} of $\Omega$ is the set where $d _{\partial \Omega}$ attains 
its maximum over $\overline \Omega$ . 
\end{definition}

\bigskip
Several topological and structure properties of these sets are known; some of them, which will be needed somewhere in the paper, are recalled in Section \ref{secnot} (see Proposition \ref{top}).
Here let us just recall that, for a general domain $\Omega$, there holds
\begin{equation}\label{f:inc}
\high(\Omega) \subseteq \Sigma (\Omega ) \subseteq\cent(\Omega) \subseteq \Cut(\Omega).
\end{equation}

Indeed, the inclusion $\high (\Omega) \subseteq \Sigma (\Omega)$  follows immediately from the eikonal equation; for the remaining inclusions see \cite[Thm.~3B]{Frem}. 

We point out that these inclusions may be strict. Simple examples are the following: when $\Omega = R$ is a rectangle one has
\[
\high(R) \subsetneq \Sigma (R ) =  \cent(R) \subsetneq \Cut(R),
\]
while $\Omega = E$ is an ellipse one has
\[
\high(E) \subsetneq \Sigma (E ) \subsetneq\cent(E) =  \Cut(E). 
\]
More pathological examples, where these sets turn out to be ``substantially'' different, are indicated in Remark \ref{r:pat} below. 

We now turn our attention to the question stated as (**) in the Introduction: what can be said about planar domains $\Omega$ for which all the inclusions in (\ref{f:inc}) become equalities? The answer is contained in the next statement.

\begin{theorem} \label{forBK}
Let $\Omega \subset \R ^2$ be a nonempty open bounded  connected domain such that 
\begin{equation}\label{f:ug}
\high(\Omega) = \Cut (\Omega)=:S\,.
\end{equation}
  
Then $S$ is either a singleton or a $1$-dimensional manifold of class $C ^ {1,1}$ and, setting $\rho_\Omega := \max _{\overline \Omega} d _{\partial \Omega}$,  $\Omega$ is the $\rho_\Omega$-tubular neighborhood
$$\Omega=  S_{\rho_\Omega} := \{ x \in \R ^2 \ :\ d_S (x) < \rho_\Omega \}\,.$$

In particular, if $\Omega$ is $C ^2$, then $S$ is either a singleton or a
$1$-dimensional manifold without boundary of class $C^2$,  and $\Omega = S _{\rho _\Omega}$. 

Finally, if $\Omega$ is also simply connected, then $S$ is a singleton, and 
$\Omega$ is the disk with center $S$ and  radius $\rho _\Omega$.
\end{theorem}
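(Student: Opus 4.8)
The plan is to show that $S = \high(\Omega) = \Cut(\Omega)$ is a proximally smooth set with empty interior which reconstructs $\Omega$, and then to invoke Theorems~\ref{t:c11} and~\ref{t:c2}. Throughout write $d := d_{\partial\Omega}$ and $\rho := \rho_\Omega = \max_{\overline\Omega} d$; note $\rho > 0$ because $\Omega$ is nonempty and open.

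The first step is to prove the identity $d_S = \rho - d$ on $\overline\Omega$. The inequality $d_S(x) \ge \rho - d(x)$ is immediate, since for every $s \in S$ the $1$-Lipschitz continuity of $d$ gives $\rho - d(x) = d(s) - d(x) \le |s - x|$. For the reverse inequality, observe first that $S$ has empty interior: on an open subset of $S = \high(\Omega)$ the function $d$ would be constantly equal to $\rho$, contradicting $|\nabla d| = 1$ a.e.\ in $\Omega$. Observe also that $S$ is compact (closed in the compact set $\overline\Omega$) and contained in $\Omega$ (since $d \equiv \rho > 0$ on $S$). Now fix $x \in \Omega$ with $d(x) < \rho$; then $x \notin \Sigma(\Omega)$, because $\Sigma(\Omega) \subseteq \Cut(\Omega) = S = \{d = \rho\}$. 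By the structure of the distance function from the boundary recalled in Section~\ref{secnot}, $x$ lies on a maximal segment $[y, z] \subseteq \overline\Omega$ with $y = \pi_{\partial\Omega}(x) \in \partial\Omega$, along which $d$ coincides with the arclength measured from $y$, and whose far endpoint $z$ belongs to $\Cut(\Omega)$. Since $\Cut(\Omega) = \high(\Omega)$ we get $d(z) = \rho$, hence $|x - z| = d(z) - d(x) = \rho - d(x)$; as $z \in S$ this gives $d_S(x) \le \rho - d(x)$. The identity therefore holds on $\{x \in \Omega : d(x) < \rho\}$, trivially on $S = \{d = \rho\}$, and hence on all of $\overline\Omega$ by continuity of both sides and density of $\Omega$.

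Next I would read off the conclusions. Set $r_S := \rho$. If $0 < d_S(x) < \rho$, choose $s \in S$ with $|x - s| < \rho = d(s)$; then $x \in B_{d(s)}(s) \subseteq \Omega$, so $x \in \Omega \setminus S = \Omega \setminus \Cut(\Omega)$, where $d$ is of class $C^1$ (Section~\ref{secnot}), and by the identity above $d_S = \rho - d$ is $C^1$ near $x$. Hence $S$ is proximally $C^1$ of radius $r_S$ with empty interior. Moreover $\Omega = S_\rho$: the inclusion $\Omega \subseteq S_\rho$ follows from $d_S(x) = \rho - d(x) < \rho$ for $x \in \Omega$, while if $d_S(x) < \rho$ and $x \notin \Omega$, the segment from $x$ to a nearest point $s \in S \subseteq \Omega$ would meet $\partial\Omega$ at some $y$ with $d_S(y) \le |y - s| \le |x - s| = d_S(x) < \rho$, contradicting $d_S \equiv \rho$ on $\partial\Omega$. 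Finally, since $d_S$ is differentiable on $\Omega \setminus S$, the metric projection $\pi_S$ is single-valued and continuous on $\Omega$, and $d_S$ does not increase along the segments $[x, \pi_S(x)] \subseteq \Omega$, so $H(x,t) := (1-t)x + t\,\pi_S(x)$ is a deformation retraction of $\Omega$ onto $S$. In particular $S$ is connected, and Theorem~\ref{t:c11} gives that $S$ is a singleton or a $1$-dimensional manifold of class $C^{1,1}$; with $\Omega = S_\rho$ this is the first assertion. If in addition $\Omega$ is of class $C^2$, then $\partial\Omega$ is a $C^2$ curve and $d \in C^2(\Omega \setminus \Cut(\Omega))$ (off the cut locus $\nabla d$ is minus the unit normal to $\partial\Omega$ precomposed with the nearest-point projection onto $\partial\Omega$, and both maps are $C^1$); the same argument, with $C^1$ replaced by $C^2$, shows that $S$ is proximally $C^2$, so Theorem~\ref{t:c2} gives that $S$ is a singleton or a $1$-dimensional manifold without boundary of class $C^2$, again with $\Omega = S_\rho$.

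For the last assertion, assume $\Omega$ also simply connected. Being a deformation retract of $\Omega$, $S$ is then connected and simply connected. A connected $1$-dimensional manifold without boundary is diffeomorphic to the real line or to a circle, and the compact, simply connected option among these does not exist, so $S$ cannot be such a manifold; therefore $S$ is a singleton $\{p\}$, and $\Omega = S_\rho = \{x : |x - p| < \rho\}$ is the open disk of centre $p$ and radius $\rho = \rho_\Omega$. The main obstacle is the identity of the second paragraph --- specifically, extracting from a generic interior point the unit-speed segment joining its boundary projection to the cut locus and locating its far endpoint at height $\rho$ via the hypothesis $\Cut(\Omega) = \high(\Omega)$; the $C^2$ statement further relies on the standard fact that the distance to a $C^2$ curve is $C^2$ away from its cut locus, while the structural heart of the matter is of course supplied by Theorems~\ref{t:c11} and~\ref{t:c2}.
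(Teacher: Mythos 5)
Your proof is correct, but it reaches the crucial identity $d_S=\rho_\Omega-d_{\partial\Omega}$ by a genuinely different route than the paper. The paper first notes that $d_{\partial\Omega}$ is differentiable, hence $C^1$, on $\Omega\setminus S$ (so that $\partial\Omega$ is proximally $C^1$ of radius $\rho_\Omega$) and then gets the identity from the tube formula \eqref{ugdist} of Proposition \ref{l:prox}\,(iii), applied with $\partial\Omega$ in place of $S$ and letting $r\uparrow\rho_\Omega$; you instead prove it directly by following the transport ray through a point $x\notin\Sigma(\Omega)$ to its far endpoint and using $\Cut(\Omega)=\high(\Omega)$ to place that endpoint at height $\rho_\Omega$. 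Both work, but the structural fact you invoke --- that the maximal segment issuing from $\pi_{\partial\Omega}(x)$ through $x$ ends at a point of $\Cut(\Omega)$ --- is \emph{not} among the facts recalled in Section \ref{secnot}, so you should justify it; a two-line patch is available from material the paper does recall: the far endpoint $z$ is the center of a maximal ball (an open ball of $\Omega$ strictly containing $B_{d_{\partial\Omega}(z)}(z)$ would share the boundary point $\pi_{\partial\Omega}(x)$ and hence have its center further along the same ray, contradicting maximality of the ray), so $z\in\cent(\Omega)\subseteq\Cut(\Omega)$ by \eqref{f:inc}. The other genuine difference is how you obtain connectedness (and, in the last part, simple connectedness) of $S$: the paper quotes the homotopy-equivalence result of Proposition \ref{top}\,(iii), while you construct an explicit deformation retraction of $\Omega$ onto $S$ along the segments $[x,\pi_S(x)]$, which is legitimate because the identity makes $\pi_S$ single-valued and continuous on $\Omega$ and keeps those segments inside $\Omega=S_{\rho_\Omega}$; this is more self-contained and makes the exclusion of the circle in the simply connected case explicit, where the paper is terser. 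The remaining steps (empty interior, proximal $C^1$/$C^2$ regularity of $S$ from the identity together with Proposition \ref{top}\,(iv), the equality $\Omega=S_{\rho_\Omega}$, and the application of Theorems \ref{t:c11} and \ref{t:c2}) coincide with the paper's argument.
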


\begin{remark}\label{r:nonreg}
By inspection of the proof of Theorem \ref{forBK}, it follows that, for every $r \in (0, \rho _\Omega)$, the parallel set 
\[
S_r := \big \{ x \in \R ^2 \ :\ d _S (x) < r \big \}  
%= \big \{ x \in \R ^2 \ :\ d _{\partial \Omega}(x) > \rho_\Omega - r \big \}
\]
is of class $C^{1,1}$. We point out that this is not necessarily true also for $r = \rho _\Omega$. 
In other words, a domain $\Omega$ satisfying the assumptions of Theorem \ref{forBK} does not need to be of class $C^{1,1}$, nor $C ^1$. 
For instance, let $p:= (-1, 1)$, $q:= (0,1)$, $a:= (1,0)$, $b: = (1, -1)$, and define $S$ by
$$S:= [p,q] \cup \big \{ \partial B_1 (0) \cap \{ x_1\geq 0, x_2 \geq 0 \}  \big \} \cup [a,b]\,.$$
Then the $1$-tubular neighbourhood of $S$, namely $\Omega=\big \{ x \in \R ^2 \, :\,  d _S (x) < 1 \big \}$ satisfies the assumptions of Theorem \ref{forBK}, and in particular condition (\ref{f:ug}), but is not of class $C ^1$ 
(see Figure~\ref{fig:nonreg} left).
\end{remark}

\begin{figure}[ht]
%[htbp]
\begin{minipage}{0.5\linewidth}
\centering
\includegraphics[height=3cm]{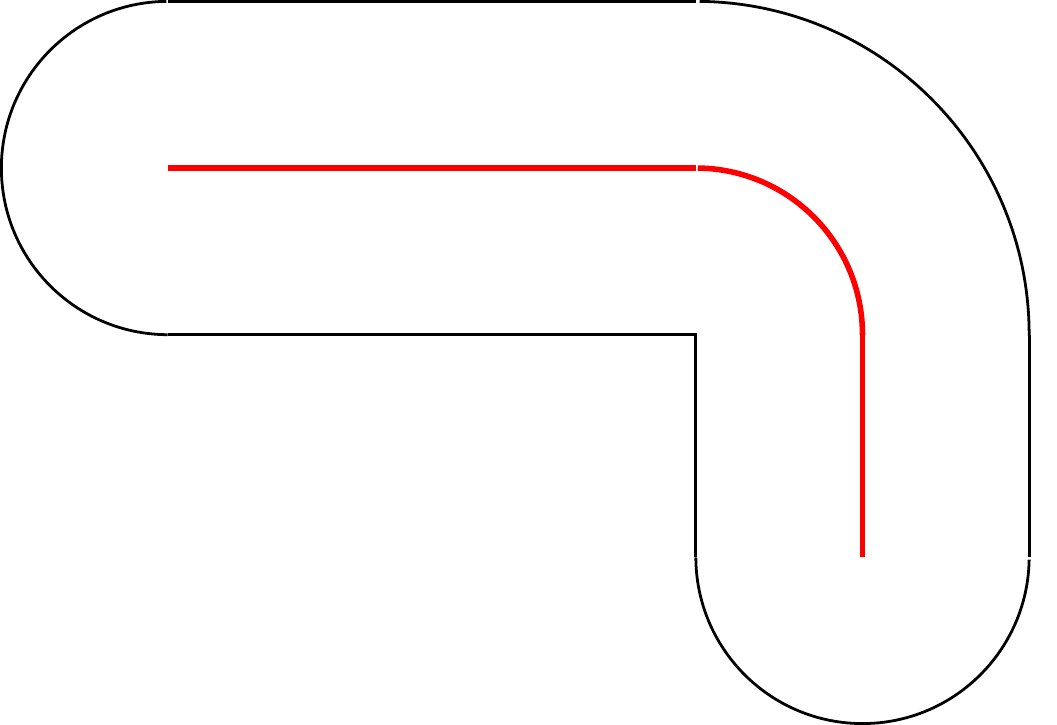}
\end{minipage}%
%\hskip1cm
\begin{minipage}{0.5\linewidth}
\centering
\includegraphics[height=3cm]{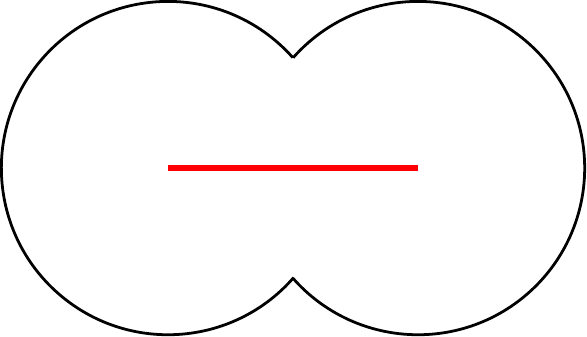}
\end{minipage}
\caption{The sets described in Remarks \ref{r:nonreg} and \ref{r:lambda}.}   
\label{fig:nonreg}   
\end{figure}

\begin{remark}\label{r:MAT}
Using the notation of \cite{Degen2004},
a maximal disk $D$ in $\Omega$ is said to be
\textsl{regular} if the contact set
$\partial D \cap \partial\Omega$ contains exactly two points,
and \textsl{singular} if this is not the case.
Then, if $\Omega$ satisfies the assumptions of 
Theorem~\ref{forBK},
and denoting by $S^*$ the (possibly empty) boundary of
the manifold $S$,
we have that all maximal disks centered at
$S\setminus S^*$ are regular,
while the ($0$ or $2$) maximal disks centered at $S^*$
are singular. 
\end{remark}

\bigskip

Let us now restrict attention to domains $\Omega$ of class $C ^1$.  
For such a domain, let $\nu_{\Omega} $ denote the inner unit normal to $\partial \Omega$, and 
%by $\pi _{\partial \Omega}$ the projection map onto $\partial \Omega$, namely the set-valued function defined by
%$$|x - \pi _{\partial \Omega} (x) | = d _{\partial \Omega} (x) \qquad \forall x \in \Omega \,.$$
%
let us recall the following definition of normal distance: 

\begin{definition}
\label{cut} 
Let $\Omega \subset \R^n$ be an open bounded domain of class $C^1$. For every $y \in \partial \Omega$,  its {\it normal distance to the cut locus} is given by  
$$\lambda _\Omega(y) := \sup \big \{ t \geq 0 \ :\ \pi_{\partial \Omega} (y + t \nu_\Omega (y) )  = \{ y \} \big \}\ ,$$
\end{definition}

As a consequence of Theorem \ref{forBK}, we are able to characterize planar domains $\Omega$ of class $C ^1$ with constant normal distance along the boundary: 

\begin{corollary}\label{cor:lambda}
Let $\Omega \subset \R ^2$ be an open bounded  connected domain of class $C ^1$ such that, for all $y \in \partial \Omega$,
\begin{equation}\label{f:lambdac} 
\lambda _\Omega(y) = \text{constant}\,.
\end{equation}
Then $\Omega$ satisfies $(\ref{f:ug})$ and hence its geometry can be characterized according to Theorem \ref{forBK}. 
\end{corollary}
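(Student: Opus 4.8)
The plan is to derive condition \eqref{f:ug} from the constancy of $\lambda_\Omega$ along $\partial\Omega$, and then invoke Theorem \ref{forBK}. Write $\lambda$ for the constant value of $\lambda_\Omega$. The first observation is that for a $C^1$ domain one always has, for every $y\in\partial\Omega$, that $y+t\nu_\Omega(y)\notin\Cut(\Omega)$ precisely for $t\in[0,\lambda_\Omega(y))$, and that the point $c(y):=y+\lambda_\Omega(y)\nu_\Omega(y)$ lies in $\Cut(\Omega)$; moreover every point of $\Omega$ lies on such a normal segment, i.e. $\overline\Omega=\{y+t\nu_\Omega(y):y\in\partial\Omega,\ 0\le t\le\lambda_\Omega(y)\}$, and the cut locus is exactly the set of endpoints $\{c(y):y\in\partial\Omega\}$. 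These are standard facts about the normal distance for $C^1$ domains and can be quoted from the references cited in Section \ref{main} (e.g. \cite{IT,LN}); I would recall them, perhaps as part of Proposition \ref{top} or explicitly here.

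Next I would show $\Cut(\Omega)\subseteq\high(\Omega)$, which together with the always-true inclusion \eqref{f:inc} gives \eqref{f:ug}. Take any $x\in\Cut(\Omega)$; then $x=c(y)$ for some $y\in\partial\Omega$, and along the segment $[y,c(y)]$ the point $x$ is the first cut point, hence $y$ is a (in fact the unique) closest boundary point to $x$ along that segment and $d_{\partial\Omega}(x)=|x-y|=\lambda_\Omega(y)=\lambda$. Thus $d_{\partial\Omega}$ takes the constant value $\lambda$ on all of $\Cut(\Omega)$. On the other hand $d_{\partial\Omega}$ attains its maximum $\rho_\Omega$ over $\overline\Omega$ on $\high(\Omega)$, and since $\high(\Omega)\subseteq\Cut(\Omega)$ we get $\rho_\Omega=\lambda$. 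Consequently every point of $\Cut(\Omega)$ realizes the maximum of $d_{\partial\Omega}$, i.e. $\Cut(\Omega)\subseteq\high(\Omega)$. Combined with $\high(\Omega)\subseteq\Cut(\Omega)$ from \eqref{f:inc}, this yields \eqref{f:ug} with $S:=\high(\Omega)=\Cut(\Omega)$, and Theorem \ref{forBK} applies verbatim.

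The subtle point — and the step I expect to be the main obstacle — is justifying rigorously that $d_{\partial\Omega}(c(y))=\lambda_\Omega(y)$, i.e. that the nearest point of $\partial\Omega$ to the cut point $c(y)$ is attained at distance exactly $\lambda_\Omega(y)$, and not at a strictly smaller distance coming from some far-away portion of the boundary. For a general $C^1$ domain one must be careful: $c(y)$ is the supremum of parameters $t$ for which $y$ is \emph{the} projection of $y+t\nu_\Omega(y)$, so for $t<\lambda_\Omega(y)$ one has $d_{\partial\Omega}(y+t\nu_\Omega(y))=t$, and by continuity of $d_{\partial\Omega}$ one obtains $d_{\partial\Omega}(c(y))=\lambda_\Omega(y)$ in the limit $t\uparrow\lambda_\Omega(y)$; this does give the clean equality, but one should double-check that $d_{\partial\Omega}(y+t\nu_\Omega(y))=t$ genuinely holds on the whole half-open segment, which is exactly the content of the standard characterization of $\lambda_\Omega$ via the normal segments foliating $\Omega$. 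Once that foliation property is in hand, the rest is immediate. A secondary minor point is to note that the case $S$ a singleton (equivalently $\Omega$ a disk) is included with no extra argument, since then $\Cut(\Omega)=\high(\Omega)$ trivially and $\lambda_\Omega$ is automatically constant and equal to the radius.
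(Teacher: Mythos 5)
Your proposal is correct and is essentially the paper's own argument: the paper's short proof by contradiction (comparing $\lambda_\Omega$ at projections of a high-ridge point and of a cut point outside the high ridge) rests on exactly the two facts you use, namely that $\lambda_\Omega(y)=\rho_\Omega$ whenever $y\in\pi_{\partial\Omega}(x)$ with $x\in\high(\Omega)$, and that cut points lie at distance $\lambda_\Omega$ of their projections, so your direct conclusion that $d_{\partial\Omega}\equiv\lambda=\rho_\Omega$ on $\Cut(\Omega)$, whence $\Cut(\Omega)\subseteq\high(\Omega)$ and \eqref{f:ug} follows from \eqref{f:inc}, is the same computation organized without contradiction. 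The only caveat is your appeal to \cite{IT,LN} for the identity $\Cut(\Omega)=\{y+\lambda_\Omega(y)\nu_\Omega(y):\ y\in\partial\Omega\}$ at mere $C^1$ regularity (those references assume smoother boundaries); you do not need the full identity, since the elementary facts that $\lambda_\Omega(y)=d_{\partial\Omega}(x)$ for $x\in\Sigma(\Omega)$ and $y\in\pi_{\partial\Omega}(x)$ (orthogonality of minimizing segments plus the definition of $\lambda_\Omega$ as a supremum) and that $d_{\partial\Omega}$ is continuous already give $d_{\partial\Omega}\equiv\lambda$ on $\Sigma(\Omega)$ and hence on its closure $\Cut(\Omega)$, which is all your argument requires.
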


\begin{remark}\label{r:lambda}
We point out that the assumption $\Omega \in C ^1$ in Corollary \ref{cor:lambda} cannot be weakened. To be more precise notice first that, if $\Omega$ is just piecewise $C ^1$, Definition \ref{cut}  of the function $\lambda _\Omega$ can still be given for $y$ belonging to $\partial \Omega$ except a finite number of points (those where $\nu _\Omega$ is not defined). 
Nevertheless,  if equality (\ref{f:lambdac}) is valid only ${\mathcal H} ^ {1}$-a.e.\ on $\partial \Omega$, the geometric condition $\high (\Omega) = \Cut (\Omega)$  is not necessarily true. For instance, let $\alpha >0$, let $r \in (\alpha, 2 \alpha)$, and  let $\Omega:=  B _ r (p) \cup B _ r (q)$, where $p := ( - \alpha, 0)$, and $q:= (\alpha, 0 )$
(see Figure~\ref{fig:nonreg} right). 
Then we have
$\lambda _\Omega(y) = r$ for all $y \in \partial \Omega \setminus \big ( \partial B _ r (p) \cap \partial B _ r (q))$, but
$\{p\} \cup \{q\} = \high (\Omega) \subsetneq \Cut (\Omega) = [p,q]$.
\end{remark}

Extending the above results to higher dimensions seems to be a delicate task. So far, we have the following generalization
of Theorem \ref{forBK}, which settles the case  of  {\it convex} sets in $n$ dimensions:

\begin{theorem}\label{t:nconvex}
Let $\Omega\subset\R^n$  be a nonempty open bounded convex set
of class $C^2$, satisfying $(\ref{f:ug})$. Then $S$ is a singleton and $\Omega$ is a ball.
\end{theorem}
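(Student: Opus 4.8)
The plan is to reduce the statement to a one-dimensional analysis along normal rays, exploiting convexity to control the geometry of the cut locus. Since $\Omega$ is convex and of class $C^2$, for every boundary point $y\in\partial\Omega$ the normal ray $y+t\nu_\Omega(y)$ realizes the distance to $\partial\Omega$ for $t\in[0,\lambda_\Omega(y)]$, and the map $(y,t)\mapsto y+t\nu_\Omega(y)$ sweeps out $\Omega$ bijectively off the cut locus; moreover $y+\lambda_\Omega(y)\nu_\Omega(y)\in\Cut(\Omega)$ whenever $\lambda_\Omega(y)$ is finite, which it is here because $\Omega$ is bounded. The first step is therefore to show that the hypothesis $\high(\Omega)=\Cut(\Omega)=:S$ forces $\lambda_\Omega$ to be constant on $\partial\Omega$, equal to $\rho_\Omega$. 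Indeed, if $z=y+\lambda_\Omega(y)\nu_\Omega(y)\in\Cut(\Omega)=\high(\Omega)$, then $d_{\partial\Omega}(z)=\rho_\Omega$; but $d_{\partial\Omega}(z)=\lambda_\Omega(y)$ along the normal segment (which is distance-realizing up to the cut point in the convex case), so $\lambda_\Omega(y)=\rho_\Omega$ for every $y$.

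The second step is to feed this into the general structure already available: Theorem \ref{forBK} (its $C^2$ clause, together with Remark \ref{r:comments}(ii) since $\Omega$ is bounded) tells us that $S$ is either a singleton or a closed $C^2$ curve, and $\Omega=S_{\rho_\Omega}$ is the $\rho_\Omega$-tubular neighbourhood of $S$. So it remains only to rule out the second possibility when $\Omega$ is convex in $\R^n$. If $S$ were a closed $C^2$ hypersurface-like manifold (in $\R^n$ with $n\ge 3$ one should be slightly careful: $S$ has empty interior and positive reach, so by the $n$-dimensional analysis it is a $C^{1,1}$, here $C^2$, submanifold without boundary, possibly of lower dimension), then $\Omega=S_{\rho_\Omega}$ is a solid tube around $S$. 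I would then derive a contradiction with convexity: a tubular neighbourhood of a compact manifold $S$ that is not a single point is never convex, because one can pick two boundary points of $\Omega$ whose connecting segment exits $\Omega$ — e.g. take the two endpoints of a diameter-like chord of the tube that passes near $S$ but whose midpoint lies at distance $>\rho_\Omega$ from $S$; alternatively, and more cleanly, observe that a convex body has connected and simply connected boundary with no "handles", whereas a tube around a genuine manifold either is not simply connected (if $S$ is a curve in $\R^2$, $\partial\Omega$ has two components or $\Omega$ is an annulus-like set, contradicting convexity immediately) or, in higher dimensions, its normal exponential structure produces points with non-unique nearest point on $\partial\Omega$ arranged on a set that cannot be the $\high$ of a convex body unless it degenerates to a point. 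The cleanest argument: a convex body is simply connected and its medial axis / cut locus is contractible, so $S$ is contractible; a closed manifold without boundary of positive dimension is not contractible; hence $S$ is $0$-dimensional, i.e. a point, and then $\Omega=B_{\rho_\Omega}(S)$ is a ball.

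I expect the main obstacle to be the rigorous justification that, for a convex $C^2$ domain in $\R^n$, the cut locus $\Cut(\Omega)$ is contractible (or at least that $\high(\Omega)=\Cut(\Omega)$ together with convexity forces it to be a point) — in the plane this is classical (the cut locus of a simply connected planar domain is a tree), but in $\R^n$ one must invoke the fact that $d_{\partial\Omega}$ is concave on the convex set $\Omega$, so its maximum set $\high(\Omega)$ is \emph{convex}; combined with $\high(\Omega)=\Cut(\Omega)$ and $\Cut(\Omega)\supseteq\cent(\Omega)$, and with the tube description $\Omega=S_{\rho_\Omega}$ coming from the positive-reach analysis, a convex $S$ of positive reach with empty interior that is a $C^2$ manifold without boundary must be a point (a convex compact $C^2$ manifold without boundary in $\R^n$ is empty unless it is a single point, since a nondegenerate convex body has nonempty interior). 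So the argument collapses to: $\high(\Omega)$ convex $\Rightarrow$ $S$ convex; $S$ a $C^2$ closed manifold and convex $\Rightarrow$ $S$ is a point; then $\Omega=S_{\rho_\Omega}$ is a ball. The remaining care is purely in citing the concavity of the distance function on convex sets and checking the endpoint regularity issues, which are routine given the results already proved.
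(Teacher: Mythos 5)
There is a genuine gap at the heart of your argument: you invoke Theorem \ref{forBK} (equivalently, ``the $n$-dimensional analysis'') to conclude that $S$ is a $C^{2}$ manifold without boundary and that $\Omega=S_{\rho_\Omega}$, but Theorems \ref{t:c11}, \ref{t:c2} and \ref{forBK} are proved only for subsets of $\R^{2}$, and the paper explicitly states that the corresponding structure question in dimension $n\geq 3$ is open and appears much harder. Nothing you write supplies that missing structure theorem: from the hypotheses one only gets that $S$ is compact, has empty interior, and (once one shows, as in the proof of Theorem \ref{forBK}, that $\partial\Omega$ is proximally $C^{1}$ of radius $\rho_\Omega$) is proximally smooth; a priori $S$ could be, say, a segment or a flat $(n-1)$-dimensional convex body lying in a hyperplane, i.e.\ a manifold \emph{with} boundary, and then both of your closing arguments (``a compact convex $C^{2}$ manifold without boundary of positive dimension cannot exist'' and the contractibility obstruction) have nothing to bite on. So the step that rules out positive-dimensional $S$ — which must use the $C^{2}$ regularity of $\partial\Omega$ in an essential way — is not actually carried out.

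The paper's proof avoids the higher-dimensional structure problem altogether, and its first move coincides with the one correct ingredient of your ``cleanest argument'': since $d_{\partial\Omega}$ is concave on the convex set $\overline\Omega$, the set $S=\high(\Omega)$ is convex, and having empty interior it lies in an affine subspace $V$ of dimension $\leq n-1$. One then takes $p,q\in S$ with $|p-q|=\mathrm{diam}(S)=2\alpha$, and slices $\Omega$ with the $2$-plane $W$ spanned by the direction $p-q$ and a direction $e_n$ transverse to $V$. Using \eqref{f:geoS} together with \eqref{f:ug}, the planar convex section $A=\Omega\cap W$ is a stadium-like domain around the segment $S\cap W=[q,p]$: two straight sides over $\{|x_1|\leq\alpha\}$ capped by two semicircumferences centered at $p$ and $q$. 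Since $\partial\Omega$ is $C^{2}$ and $W$ meets it transversally, $\partial A$ is $C^{2}$; but a stadium has $C^{2}$ boundary only when the straight part degenerates, so $\alpha=0$, $S$ is a singleton, and $\Omega$ is the ball of radius $\rho_\Omega$ about it. If you want to salvage your outline, you would have to either prove the $n$-dimensional analogue of Theorem \ref{t:c2} under \eqref{f:ug} (not available) or replace it by a direct rigidity argument such as this slicing one, which is where the real work lies.
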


%
%\rem{Questo che segue e' insensato, ma magari ci fa venire in mente qualcosa...}
%
%\begin{theorem}
%Let $\Omega\subset\R^n$ be a non-empty open simply connected set
%of class $C^2$,
%with $\Cut(\Omega) = \high(\Omega) =: S$.
%Assume in addition that $S$ is contained in an
%affine subspace of dimension $\leq n-1$,
%and contains the segment $[x,y]$,
%where $x,y\in S$ are points such that
%\[
%|x-y| = \max\{|z-w|;\ w,z\in S\}.
%\]
%Then $\Omega$ is a ball.
%\end{theorem}
%

%%%%%%%%%%%%%%%%%%%%%%%%%%%%%%%%%%%%%%%%%%%%%%%%%%%%%%%%%%
\section {Background material} \label{secnot}

In order to be as possible self-contained, in this section we give a quick overview of some properties
of proximally smooth sets (cf. Proposition \ref{l:prox}) and of the sets introduced in Definition \ref{maximal} (cf. Proposition \ref{top}), which will be needed at some point in the paper. 

\begin{proposition}\label{l:prox} 

Let $S \subset \R ^n$ be proximally $C ^1$ of radius $r_S$, and let $r \in (0, r_S)$. Then:
\begin{itemize}
\item[(i)] on the set $\{ 0 < d_S(x) < r_S\}$, the Fr\'echet differential of $d_S$ is given by 
$$d' _S(x)  = \frac{ x - \pi _ S(x)} {d_S(x)}\, ;$$

\smallskip
\item[(ii)] on the set $\{ 0 < d_S(x) < r\}$, the projection map $\pi _S$ is Lipschitz of constant $\frac{r_S} { r_S - r}$; 
in particular, 
the map $d_S$ is of class $C^{1, 1}_{\text{loc}}$ on the set $\{ 0 < d_S(x) < r_S\}$;

\smallskip
\item[(iii)]   the following equalities hold:
\begin{eqnarray}  d_{(S_r) ^c} (x)  = r-  d _ S(x)  \qquad \hbox{ on } \{ 0 < d_S(x) < r\}\, ,
 \label{ugdist} \\ \noalign{\smallskip}
 d_{\overline{S_r} } (x) = d _ S(x) -r  \qquad \hbox{ on } \{ r < d_S(x) < r_S\}
 \label{ugdist_bis}
 \, , 
\end{eqnarray}
implying in particular that $\overline {S_r}$ is proximally smooth of radius $r_S - r$;

% implying in particular that the set $\partial S_r$ 
%is proximally smooth of radius $\min\{r, r_S - r\}$:
%\begin{equation}\label{ugdist} d_{\partial S_r} (x) = r - d_ S (x)  \, ;
%\end{equation}

\smallskip
\item[(iv)]   the set $S_r$ is of class $C ^{1,1}$;
\smallskip

%\item[(v)] any open ball of radius $r_S - r$ which is externally tangent to $S_r$ has empty intersection with $S_r$; 

\smallskip
\item[(v)] if in addition 
$S$ satisfies Definition \ref{d:psm} with $C ^1$ replaced either by $C ^ {k, \alpha}$ (for some $k \geq 2$ and $\alpha \in [0,1]$), 
or by $C ^ \infty$, or by $C ^ \omega$, then 
the set $S_r$ is respectively of class $C^{k, \alpha}$, $C ^ \infty$, or $C ^ \omega$.  
\end{itemize}
\end{proposition}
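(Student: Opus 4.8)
The plan is to derive every item from the eikonal equation satisfied by $d_S$ on the tube $T:=\{0<d_S<r_S\}$, plus one classical input (the proximal normal inequality), and then to feed the regularity of $d_S$ thus obtained into the implicit function theorem. I would begin with (i), which also forces $\pi_S$ to be single‑valued on $T$. Fix $x\in T$ and any $y\in\pi_S(x)$; set $t_0:=d_S(x)=|x-y|$ and $u:=(x-y)/t_0$. Since $y\in S$, for small $\varepsilon\ge0$ one has $d_S(x-\varepsilon u)\le|x-\varepsilon u-y|=t_0-\varepsilon$, so the one‑sided derivative of $d_S$ at $x$ along $-u$ is $\le-1$, hence $=-1$ because $d_S$ is $1$‑Lipschitz. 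Thus $\langle d'_S(x),u\rangle=1$, and since $|d'_S(x)|\le1$ the equality case of Cauchy--Schwarz gives $d'_S(x)=u=(x-\pi_S(x))/d_S(x)$; in particular $y=x-d_S(x)\,d'_S(x)$ is uniquely determined.

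For (ii) I would use the proximal normal inequality: for $x\in T$, $y=\pi_S(x)$ and $p=(x-y)/d_S(x)$, one has $\langle p,z-y\rangle\le|z-y|^2/(2r_S)$ for every $z\in S$ (equivalently $B_{r_S}(y+r_Sp)\cap S=\emptyset$), which I would quote from \cite{Fed1} or \cite{CSW}. Applying it to two points $x_1,x_2$ with $d_S(x_i)<r$, taking $z=\pi_S(x_2)$ in the inequality for $x_1$ and $z=\pi_S(x_1)$ in that for $x_2$ and adding, the cross terms recombine (using $x_i-\pi_S(x_i)=d_S(x_i)p_i$) to yield
\[
\Big(1-\tfrac{d_S(x_1)+d_S(x_2)}{2r_S}\Big)|\pi_S(x_1)-\pi_S(x_2)|^2\le\langle x_1-x_2,\pi_S(x_1)-\pi_S(x_2)\rangle\le|x_1-x_2|\,|\pi_S(x_1)-\pi_S(x_2)|;
\]
since $d_S(x_i)<r$ the prefactor exceeds $(r_S-r)/r_S$, giving the Lipschitz constant $r_S/(r_S-r)$. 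Combining this with (i), $d'_S=(x-\pi_S(x))/d_S(x)$ is, on any set with closure in $T$, a product and quotient of Lipschitz maps with $d_S$ bounded away from $0$, whence $d_S\in C^{1,1}_{\mathrm{loc}}(T)$.

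For (iii) I would invoke the gradient flow of $d_S$: since $d'_S$ is locally Lipschitz on $T$ with $|d'_S|\equiv1$, through each point of $T$ there is a unique unit‑speed solution $\gamma$ of $\dot\gamma=d'_S(\gamma)$ with $\tfrac{d}{dt}d_S(\gamma(t))=|d'_S(\gamma(t))|^2=1$, persisting as long as $d_S(\gamma(t))\in(0,r_S)$. As $S_r=\{d_S<r\}$ is open and each point $x$ with $d_S(x)=r$ is approximated from inside by $x-\varepsilon(x-\pi_S(x))/r$, we get $\overline{S_r}=\{d_S\le r\}$, $\partial S_r=\{d_S=r\}$ and $(S_r)^c=\{d_S\ge r\}$. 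If $0<d_S(x)<r$, flowing forward for time $r-d_S(x)$ reaches a point $z$ with $d_S(z)=r$, so $z\in(S_r)^c$ and $|x-z|\le r-d_S(x)$, which is optimal since $d_S$ is $1$‑Lipschitz; hence $d_{(S_r)^c}(x)=r-d_S(x)$. Flowing backward for time $d_S(x)-r$ from $x$ with $r<d_S(x)<r_S$ lands in $\overline{S_r}$ and gives $d_{\overline{S_r}}(x)=d_S(x)-r$; since $\{0<d_{\overline{S_r}}<r_S-r\}\subset\{r<d_S<r_S\}$, there $d_{\overline{S_r}}=d_S-r\in C^1$, i.e. $\overline{S_r}$ is proximally $C^1$ of radius $r_S-r$. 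Finally, (iv) and (v) follow from the implicit function theorem applied on $\partial S_r=\{d_S=r\}$, where $d_S$ is $C^{1,1}_{\mathrm{loc}}$ — and $C^{k,\alpha}$, $C^\infty$ or $C^\omega$ under the strengthened hypothesis of (v) — with $d'_S\neq0$ by (i): this exhibits $\partial S_r$ locally as a graph of the corresponding class, with $S_r$ on one side, and straightening it produces the charts required in the definition of a domain of the corresponding class.

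The only non‑soft ingredient is the proximal normal inequality used in (ii); everything else is a routine consequence of the eikonal equation together with the local Lipschitz bound on $d'_S$. If a fully self‑contained treatment were wanted, the natural route would be to first establish the ray property — $\pi_S(\pi_S(x)+su)=\pi_S(x)$ for all $s\in[0,r_S)$, with $u=(x-\pi_S(x))/d_S(x)$ — from which the proximal normal inequality is immediate (it amounts to $\bigcup_{s<r_S}B_s(\pi_S(x)+su)=B_{r_S}(\pi_S(x)+r_Su)$ missing $S$) and which also makes the distance identities in (iii) transparent without ODEs; this is the step I expect to demand the most care.
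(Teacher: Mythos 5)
Your proposal is correct, but it takes a genuinely different route from the paper: the paper does not reprove items (i)--(iv) at all, it simply cites \cite{CSW} (Thm.~3.1 for (i), Thm.~4.8 for (ii), Thm.~4.1(c) and Lemma~3.3 for (iii), Cor.~4.15 for (iv)) and only argues (v) directly via the Implicit Function Theorem; you instead give an essentially self-contained derivation whose sole external input is the proximal normal inequality with constant $1/(2r_S)$ (i.e.\ the exterior $r_S$-sphere condition, which is exactly \cite[Thm.~4.1(d)]{CSW}, also quoted in Remark \ref{r:comments}(iv); note it yields realization by $r'$-balls for every $r'<r_S$, so your constant is obtained after letting $r'\uparrow r_S$, as your closing remark on the ray property implicitly concedes). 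Your eikonal argument for (i) also establishes single-valuedness of $\pi_S$ on the tube, the monotonicity trick in (ii) reproduces the sharp constant $r_S/(r_S-r)$, and your gradient-flow proof of the two distance identities in (iii) is a clean substitute for the ray-property argument used in the literature; the identification $\overline{S_r}=\{d_S\le r\}$, $\partial S_r=\{d_S=r\}$ and the inclusion $\{0<d_{\overline{S_r}}<r_S-r\}\subset\{r<d_S<r_S\}$ are all correctly justified or immediate. Two minor points you should make explicit: in (ii), ``any set with closure in $T$'' should read ``any compact subset of $T$'' (you need $d_S$ bounded away from $0$ and from $r_S$ to combine the Lipschitz bounds); and in (iv) the classical IFT only gives a $C^1$ graph, so to reach $C^{1,1}$ (and $C^{k,\alpha}$ in (v)) you should add the standard observation that the implicit function's derivative is a quotient of partial derivatives of $d_S$ evaluated along the graph, with denominator bounded away from zero since $|d_S'|=1$, hence it inherits the Lipschitz/H\"older modulus --- the same observation is implicitly used by the paper in its proof of (v). What the paper's approach buys is brevity and constants already matched to the literature; what yours buys is a transparent, almost self-contained argument that makes visible where the hypothesis ``proximally $C^1$ of radius $r_S$'' is actually used.
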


\begin{proof}
We refer to \cite{CSW}: for (i), see Thm.~ 3.1; for (ii), see Thm.~4.8; for (\ref{ugdist}), see Thm.~ 4.1 (c); for (\ref{ugdist_bis}), see Lemma 3.3; for (iv), see Corollary 4.15  and use also the $C ^ {1,1}_{\rm loc}$ regularity of $d_S$ stated at item (ii).
%for (v), see Thm.~ 4.1 (d).  
Finally, (v) can be easily obtained as follows: if $d_S$ is of class $C^{k, \alpha}$, $C ^ \infty$, or $C ^ \omega$  on the set $\{ 0 < d_S(x)< r _S \}$, since on the same set by (i) it holds $\|d'_S (x) \| = 1$,  by the Implicit Function Theorem $S_r$ inherits the same regularity. 
\end{proof}

%\begin{remark} Let us  advertise the interested reader that a detailed comparison between proximal smoothness and the %exterior sphere condition appearing in statement (iv) above can be found in the paper \cite{NoStTa}. \end{remark}

\medskip
\begin{proposition} \label{top}

Let $\Omega \subset \R^n$ be an open bounded domain. 
\begin{itemize}

\item[(i)] $\Sigma (\Omega)$ is ${ C} ^2$-rectifiable, namely it can be covered up to a ${\mathcal H} ^ {n-1}$-negligible set by a countable union of embedded $(n-1)$-manifolds of class $C ^2$; in particular, $\Sigma (\Omega)$ has null Lebesgue measure. 

\item[(ii)] $M (\Omega)$ has null Lebesgue measure. 

\item[(iii)]  $\Sigma (\Omega)$ has the same homotopy type as $\Omega$.

\item[(iv)] If $\Omega \in C^ 2$,  it holds
$$\cent(\Omega) 
= \Cut(\Omega)\,.$$ 
Moreover, in this case $\Cut(\Omega)$ has null Lebesgue measure, is contained into $\Omega$, and
$d_{\partial \Omega}$ is of class $C^2$ in 
$\overline \Omega \setminus \Cut(\Omega)$.  
\end{itemize}
\end{proposition}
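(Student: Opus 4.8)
The plan is to read this proposition as a dictionary of known structural facts and to reduce each item to an existing result, the unifying observation being that on $\Omega$ the function $u:=d_{\partial\Omega}$ coincides with the distance from the closed set $\partial\Omega$ and satisfies $\Omega\subseteq\{u>0\}$, so that $u$ is locally semiconcave on $\Omega$. For (i), I would recall that $\Sigma(\Omega)$ is exactly the set of non-differentiability points of the semiconcave function $u$, equivalently the points whose superdifferential is not a singleton. By the stratification theorem for singular sets of semiconcave functions (Alberti's rectifiability result \cite{Al}, see also \cite{Amb2}), the set where the superdifferential has dimension $\geq k$ is countably $(n-k)$-rectifiable by manifolds of class $C^2$; taking $k=1$ yields that $\Sigma(\Omega)$ is $C^2$-rectifiable of dimension $n-1$, hence $\haus$-$\sigma$-finite and in particular Lebesgue-negligible.

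For (ii) I would simply observe that $\high(\Omega)\subseteq\Sigma(\Omega)$: the maximum of $u$ over $\overline\Omega$ is strictly positive, hence attained only at interior points, and at an interior maximum $u$ cannot be differentiable, for otherwise $\nabla u=0$ would contradict the eikonal identity $|\nabla u|=1$ valid at every differentiability point (cf.\ Proposition \ref{l:prox}(i)). This is the first inclusion in (\ref{f:inc}), and (ii) follows at once from (i). For (iii), the content is that the nearest-point flow retracts $\Omega$ onto its skeleton: I would invoke the homotopy-equivalence theorem for the medial axis of a bounded open set \cite{Lie} (see also \cite{Frem}), which asserts exactly that $\Sigma(\Omega)$, being the set of points admitting at least two nearest boundary points, is a deformation retract of $\Omega$ and hence shares its homotopy type.

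The delicate item, which I expect to be the main obstacle, is (iv), since here one must upgrade the previous ``soft'' information by genuinely using the $C^2$ hypothesis on $\partial\Omega$. I would proceed in three steps. First, from (\ref{f:inc}) we have $\Sigma(\Omega)\subseteq\cent(\Omega)\subseteq\Cut(\Omega)$, and since $\Cut(\Omega)$ is by definition the closure of $\Sigma(\Omega)$, this forces $\Cut(\Omega)=\overline{\cent(\Omega)}$; thus the asserted equality $\cent(\Omega)=\Cut(\Omega)$ is equivalent to the closedness of the central set. Second, I would exploit the regularity theory for the distance to a $C^2$ boundary \cite{LN,IT}: the uniform interior ball condition furnished by the bounded principal curvatures keeps every point sufficiently close to $\partial\Omega$ off the singular set, so that $\Cut(\Omega)$ lies at positive distance from $\partial\Omega$ and is compactly contained in $\Omega$; the same theory yields that off $\Cut(\Omega)$ each point has a unique non-focal nearest boundary point, whence $\pi_{\partial\Omega}$ is of class $C^1$ and $u=d_{\partial\Omega}$ is of class $C^2$ on $\overline\Omega\setminus\Cut(\Omega)$. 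This in turn bounds the radius of maximal balls from below on compact subsets of $\Omega$ and forces $\cent(\Omega)$ to be closed, giving $\cent(\Omega)=\Cut(\Omega)$.

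Third, for the null Lebesgue measure of $\Cut(\Omega)$ I would rely again on \cite{LN,IT}, which bound the $\haus$-measure of the cut locus of a $C^2$ domain locally, so that $\Cut(\Omega)$ is not merely the closure of the negligible set $\Sigma(\Omega)$ but is itself $(n-1)$-rectifiable with locally finite $\haus$-measure, hence Lebesgue-null. The true crux is precisely this last point: one must show that passing from $\Sigma(\Omega)$ to its closure $\Cut(\Omega)$ adjoins only a lower-dimensional, negligible set of limit centers, and it is here that the $C^2$ hypothesis — rather than mere semiconcavity, which suffices for (i)--(iii) — becomes indispensable, since for a general domain the closure of the skeleton may fail to inherit its smallness.
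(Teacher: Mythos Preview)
Your proposal is correct and follows essentially the same strategy as the paper: each item is reduced to a citation from the literature, the key input for (i) being semiconcavity of $d_{\partial\Omega}$ together with Alberti's structure theorem (note that in this paper's bibliography the relevant reference is \cite{Alb}, not \cite{Al}). The only notable divergence is in (iv), where the paper simply invokes \cite[Sect.~6]{CMf} for all the claims at once, whereas you unpack the argument and route the measure and regularity statements through \cite{LN,IT}; both lead to the same conclusions, and your more explicit sketch of why $\cent(\Omega)$ is closed under the $C^2$ hypothesis is a reasonable expansion of what the single citation encapsulates.
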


\begin{proof} 
(i) The fact that $\Sigma (\Omega)$ has null Lebesgue measure follows from Rademacher Theorem. 
Since $d_{\partial\Omega}$ is locally semiconcave in $\Omega$,
the $C^{2}$-rectifiability of $\Sigma(\Omega)$ 
follows from the structure result proved in \cite{Alb}. 
%is proved in \cite[Prop.~3.9]{MM} (by exploiting the structure result proved in \cite{Alb}). 

(ii) See \cite[Prop.~3N]{Frem}. 

(iii) See \cite[Thm.~6]{ACKS}, \cite[Thm.~4.19]{Lie}.  

(iv)  See  \cite[Sect.~6]{CMf}. 
\end{proof}

\medskip
\begin{remark}\label{r:pat}
We remark that the property of $\Sigma(\Omega)$ and $M (\Omega)$ of having null Lebesgue measure is not enjoyed in general by $\Cut (\Omega)$:   in  \cite[Section 3]{MM}, there is an example  of two-dimensional convex set $\Omega$ whose cut locus has positive Lebesgue measure.  We also point out that the central set $\cent (\Omega)$ of a planar domain may fail to be ${\mathcal H} ^ {1}$-rectifiable (see the examples in \cite[Section 4]{Frem}), and it may even happen to have Hausdorff dimension $2$ (see \cite{BiHa}). 
\end{remark}

\section{Analysis of the contact set}\label{secanal}
\bigskip

Throughout this section, we work in two space dimensions. 
We start by elucidating the geometry of tubular neighborhoods of a set which satisfies (H1): 

\begin{lemma} Let $S \subset \R ^2$ satisfy $(H1)$, and  let $r$ be a fixed radius in $(0, r_S)$. 
Then it holds
\begin{eqnarray} S = \high ( S_r) = \Sigma (S_r) = \cent(S_r) = \Cut(S_r) \label{f:esse}
 \\ \noalign{\smallskip}
\lambda _{S_r} (y) = r \qquad \forall y \in \partial S _r\,. \label{f:lSr}
\end{eqnarray}
\end{lemma}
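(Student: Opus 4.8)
The plan is to show that for a fixed $r\in(0,r_S)$, the set $S$ plays the role of ``center'' of the annular region $S_r\setminus S$, and that every notion in \eqref{f:inc} collapses onto $S$ when $\Omega = S_r$. First I would exploit Proposition \ref{l:prox}(i): on $\{0<d_S<r_S\}$ one has $d_S'(x) = (x-\pi_S(x))/d_S(x)$, so $d_S$ is differentiable there with unit gradient, and in particular $d_S$ is smooth away from $S$ inside $S_r$. Combining this with the identity \eqref{ugdist}, namely $d_{(S_r)^c}(x) = r - d_S(x)$ on $\{0<d_S<r\}$, I would identify the distance to the boundary of $\Omega = S_r$: since $\partial S_r \subset (S_r)^c$ and $\Omega$ is open, for $x\in\Omega$ we have $d_{\partial\Omega}(x) = d_{(S_r)^c}(x) = r - d_S(x)$. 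Hence $d_{\partial\Omega}$ attains its maximum on $\overline\Omega$ exactly where $d_S$ attains its minimum, i.e.\ on $S$ (where $d_S = 0$), giving $\high(S_r) = S$; and $\rho_{S_r} = r$.

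Next I would handle the singular set. Because $d_{\partial\Omega} = r - d_S$ on $\Omega$, the function $d_{\partial\Omega}$ is differentiable precisely where $d_S$ is differentiable; by Proposition \ref{l:prox}(i) this holds on $\{0<d_S<r\}$, so $\Sigma(S_r)\subseteq S$. For the reverse inclusion $S\subseteq\Sigma(S_r)$, I would use that $S$ has empty interior: if some $x\in S$ were a point of differentiability of $d_{\partial\Omega}$, then since $d_{\partial\Omega}$ satisfies the eikonal equation $|\nabla d_{\partial\Omega}| = 1$ a.e.\ and $d_{\partial\Omega} = \rho_{S_r}$ on $S$, the point $x$ would be an interior maximum of a differentiable function, forcing $\nabla d_{\partial\Omega}(x) = 0$, contradicting the eikonal equation (which holds at points of differentiability in $\Omega$). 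More carefully, one argues that a point where $d_{\partial\Omega}$ is differentiable and equals the global max must have zero gradient, but differentiability of $d_{\partial\Omega}$ at an interior point forces unit gradient; hence no point of $S$ can be a differentiability point, so $S\subseteq\Sigma(S_r)$. This yields $\Sigma(S_r) = S$, whence $\Cut(S_r) = \overline{\Sigma(S_r)} = S$ since $S$ is closed. Finally the chain \eqref{f:inc} forces $\cent(S_r)=S$ as well, proving \eqref{f:esse}.

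For \eqref{f:lSr}, fix $y\in\partial S_r$. Since $\overline{S_r}$ is proximally smooth of radius $r_S - r$ by Proposition \ref{l:prox}(iii), and by \eqref{ugdist_bis} the distance to $\overline{S_r}$ from outside is $d_S - r$, while from inside $d_{\partial S_r} = r - d_S$, the boundary $\partial S_r$ is of class $C^{1,1}$ by Proposition \ref{l:prox}(iv), so the inner normal $\nu_{S_r}(y)$ is well defined. I claim the inner normal ray from $y$ is exactly the segment $[\,\pi_S(y)^{\,+}, y\,]$ directed toward $\pi_S(y)$: indeed $d_S'(y)$ points in the direction $y - \pi_S(y)$ and, by \eqref{ugdist}, $\nu_{S_r}(y) = \nabla d_{\partial S_r}(y) = -d_S'(y)$, which points from $y$ toward $\pi_S(y)$. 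Along this ray, $y + t\nu_{S_r}(y) = y + t\frac{\pi_S(y)-y}{|\pi_S(y)-y|}$, and for $t\in[0,r)$ this point has $d_S = r - t > 0$ with unique projection (namely $\pi_S(y)$, by Proposition \ref{l:prox}(i)); consequently $\pi_{\partial S_r}(y + t\nu_{S_r}(y)) = \{y\}$ for every such $t$, since the distance to $\partial S_r$ equals $r - d_S = t$ and is realized at $y$. At $t = r$ the point reaches $\pi_S(y)\in S = \Sigma(S_r)$, which has multiple projections onto $\partial S_r$. Hence $\lambda_{S_r}(y) = r$.

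The main obstacle I anticipate is the argument $S\subseteq\Sigma(S_r)$: one must rule out that $d_{\partial\Omega}$ is differentiable at points of $S$, and the clean way is to invoke that $S$ has empty interior together with the fact that a differentiable interior point of the distance function has unit gradient (eikonal), so such a point cannot be a global maximum of $d_{\partial\Omega}$; but since all of $S$ lies at the maximum level $\rho_{S_r}$, every point of $S$ must be singular. A secondary subtlety is verifying that the inner normal to $\partial S_r$ is genuinely single-valued and points back toward $S$, which relies essentially on the $C^{1,1}$ regularity from Proposition \ref{l:prox}(iv) and on formula (i) for $d_S'$; once that is in hand, the computation of $\lambda_{S_r}$ is routine.
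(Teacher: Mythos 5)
Your proposal follows essentially the same route as the paper: everything hinges on the identity $d_{(S_r)^c} = r - d_S$ from Proposition \ref{l:prox}(iii) together with the chain \eqref{f:inc}, and your normal-segment computation of $\lambda_{S_r}$ is exactly what the paper treats as an immediate consequence of \eqref{f:esse} and the $C^{1,1}$ regularity of $\partial S_r$. The only differences in organization are cosmetic: the paper reads off $\high(S_r)=\cent(S_r)=S$ directly and then squeezes $\Sigma(S_r)$ and $\Cut(S_r)$ via \eqref{f:inc} and the closedness of $S$, whereas you compute $\Sigma(S_r)=S$ directly (eikonal versus interior maximum) and squeeze $\cent(S_r)$ instead.

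There is, however, one step you assert as if it were contained in \eqref{ugdist} but which needs an argument, and it is precisely the crux of the paper's short proof: the identity $d_{\partial S_r}(x)=r-d_S(x)$ for \emph{all} $x\in S_r$. Equation \eqref{ugdist} covers only $\{0<d_S<r\}$, i.e.\ $S_r\setminus S$; its extension to the points of $S$ (equivalently, the fact that $d_{\partial S_r}\equiv r$ on $S$, so that all of $S$, and not merely the subset where the maximum happens to be attained, lies in $\high(S_r)$) is exactly where the empty-interior hypothesis must be spent. The paper does this by density: $S^c$ is dense, so take $x_h\in S_r\setminus S$ with $x_h\to x\in S$ and pass to the limit in \eqref{ugdist}. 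Without empty interior the extension is false (for $S$ a closed disk, $d_{(S_r)^c}$ exceeds $r$ at the center), so this is not a cosmetic omission. You do invoke the empty-interior hypothesis, but in the inclusion $S\subseteq\Sigma(S_r)$, where it is not what carries the argument: the eikonal/maximum reasoning there is fine, but it presupposes that $d_{\partial S_r}$ attains the value $r$ at \emph{every} point of $S$, which is again the extension just described. Once that one-line density-plus-continuity argument is inserted, your proof is complete; in \eqref{f:lSr}, the uniqueness claim $\pi_{\partial S_r}(y+t\nu_{S_r}(y))=\{y\}$ for $t<r$ also deserves a line (a triangle-inequality argument forces any competing projection to lie on the ray from $\pi_S(y)$ through $y$ at distance $r$ from $\pi_S(y)$, hence to equal $y$), but that is routine.
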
 

\proof We observe that
\begin{equation}\label{ugdist_ter} d_{(S_r) ^c} (x) = r -  d _ S(x)  \qquad \forall x \in S _ r  \, . 
\end{equation}
Indeed, for $ x\in S _r \setminus S$, the above equality holds true by (\ref{ugdist}) in Proposition \ref{l:prox} (iii). On the other hand, since by assumption $S$ has empty interior, its complement $S^c$ is dense in $\R ^2$. Then, given $x \in S$, there exists a sequence $\{ x _h \}$ contained into $S _r \setminus S$, with $\lim _h x _h = x$. By applying (\ref{ugdist}) to each $x_h$, and then passing to the limit as $h \to + \infty$, we get $d_{(S_r) ^c} (x) = r$, which extends the validity of (\ref{ugdist}) to $S$ and proves (\ref{ugdist_ter}). In view of  (\ref{ugdist_ter}), it is clear that $S = \high (S_r) = \cent (S_r)$;
then (\ref{f:esse}) follows recalling (\ref{f:inc}) and the fact that $S$ is closed. 
After noticing that $\lambda _{S_r}$ is well-defined thanks to Proposition \ref{l:prox} (iv), equality (\ref{f:lSr}) readily follows from Definition \ref{cut} and (\ref{f:esse}). 
\qed

\bigskip
\begin{definition}
Let $S \subset \R ^2$ satisfy $(H1)$, let $p \in S$, and let $r$ be a fixed radius in $(0, r_S)$.
We call {\it contact set} of $p$ into $S_r$ the  intersection of $\partial S_r$ and the closure of $B _ r (p)$ 
 (which is a maximal disk contained into $S_r$): 
$$
C_r(p) := \partial B _r (p) \cap \partial S_r = \big \{ y \in \partial S_r \ :\ |y - p| = r \big \}\ , \qquad p \in S\, . 
$$
\end{definition}

\bigskip
\begin{remark}\label{r:two} By its definition, $C_r (p)$ is a nonempty closed set, whose 
connected components are singletons or closed arcs. Moreover, in view of (\ref{f:esse}), $C_r (p)$ contains at least two points 
(see \cite[Corollary~1, p.~67]{Clar}). Notice also that, since $r < r_S$, it holds
$C_r(p) \cap C _r(q) =  \emptyset$ if  $p \neq q$.  \end{remark}

We are now going to carry on a thorough geometric analysis of the contact set $C_r (p)$: our objective is giving a complete characterization of it, which will be achieved in Proposition \ref{p:reg}. 
As intermediate steps, in the following two lemmas  we begin the investigation of the singletons and the arcs which form $C_r (p)$. 

\begin{lemma}
\label{l:ang}
Let $S\subset \R^2$ satisfy $(H1)$ and let $r \in (0, r_S)$.
Let $p\in S$, and let $a,b \in C_r(p)$.  
If $a$ and $b$ are distinct and not antipodal, then $C_r(p)$
contains the arc of $\partial B _r (p)$ of length $< r\pi$
joining $a$ and $b$.
\end{lemma}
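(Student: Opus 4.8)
The goal is to show that if $a,b \in C_r(p)$ are distinct and not antipodal, then the shorter arc $\gamma$ of $\partial B_r(p)$ joining them lies entirely in $C_r(p)$, i.e. every point of $\gamma$ is at distance exactly $r$ from $S$ (equivalently, lies on $\partial S_r$). Since every point of $\gamma$ is at distance $r$ from $p \in S$, we automatically have $d_S(y) \le r$ for all $y \in \gamma$, so the only thing to prove is the reverse inequality $d_S(y) \ge r$, i.e. $B_r(y) \cap S = \emptyset$ for every $y \in \gamma$.

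\textbf{First step: locate $p$ via the exterior sphere / projection structure.} Because $a \in C_r(p) \subset \partial S_r$ and $B_r(p) \subset S_r$ is a maximal disk, $p$ is a closest point of $S$ to $a$; more precisely $a \in S_r \setminus S$ has $d_S(a) = r$ and $\pi_S(a) = p$, so by Proposition \ref{l:prox}(i) the point $p$ is the unique projection of $a$ onto $S$, and the same holds for $b$. Consequently the two open balls $B_r(a)$ and $B_r(b)$ are disjoint from $S$ (this is the exterior sphere property recalled in Remark \ref{r:comments}(iv), which holds since $S$ is proximally $C^1$; alternatively it follows directly from $d_S(a) = d_S(b) = r$). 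Thus $S$ avoids $B_r(a) \cup B_r(b)$, and also $S$ avoids $B_r(p)$ since $B_r(p) \subset S_r$ and $d_S < r$ on $S_r$ means... wait, more carefully: $B_r(p) \subset S_r$ just says $d_S < r$ there, not that it avoids $S$; but $p \in S$ and $S$ has empty interior, so this is the subtle point and I will not rely on it.

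\textbf{Second step: the elementary geometric lemma.} Fix $y \in \gamma$, the shorter arc between $a$ and $b$; I must show $B_r(y) \cap S = \emptyset$. Since $S \subset \big(B_r(a) \cup B_r(b)\big)^c$, it suffices to prove the purely Euclidean inclusion
\[
B_r(y) \ \subseteq\ B_r(a) \cup B_r(b) \qquad \text{for every } y \in \gamma .
\]
This is a planar fact about three points $a, y, b$ on a common circle of radius $r$ with $y$ on the short arc between $a$ and $b$: the lune-shaped region $B_r(y)$ is covered by the union of the two equal-radius disks centered at the arc's endpoints. I would prove it by a direct computation: place $p$ at the origin, write $a = r(\cos\alpha, \sin\alpha)$, $b = r(\cos\beta, \sin\beta)$, $y = r(\cos\theta,\sin\theta)$ with $\theta$ between $\alpha$ and $\beta$ and $|\alpha - \beta| < \pi$ (the non-antipodal hypothesis is exactly $|\alpha - \beta| \ne \pi$, and the short-arc hypothesis gives $< \pi$); then for $z$ with $|z - y| < r$ one checks that $|z - a| < r$ or $|z - b| < r$ by an angular/convexity argument — e.g. the function $\phi \mapsto |z - r e^{i\phi}|^2$ is a (co)sinusoid in $\phi$, hence has no interior local maximum on any arc of length $<\pi$... actually its maximum on the closed short arc $[\alpha,\beta]$ is attained at an endpoint, so $|z - y| \ge \min(|z-a|,|z-b|)$ fails in the wrong direction; the correct statement is $\max_{\phi \in [\alpha,\beta]} |z - r e^{i\phi}|$ is attained at $\alpha$ or $\beta$, giving $\min(|z-a|,|z-b|) \le |z-y| < r$. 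This yields the inclusion.

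\textbf{Conclusion and the main obstacle.} Combining the two steps: for $y \in \gamma$ we get $B_r(y) \subseteq B_r(a) \cup B_r(b)$, and since $S$ is disjoint from the latter, $B_r(y) \cap S = \emptyset$, hence $d_S(y) \ge r$; together with $d_S(y) \le |y - p| = r$ this gives $d_S(y) = r$, so $y \in \partial S_r \cap \partial B_r(p) = C_r(p)$. The length bound in the statement is just the restatement of "short arc, joining non-antipodal points" ($< r\pi$). I expect the only real work to be the elementary inclusion $B_r(y) \subseteq B_r(a) \cup B_r(b)$: it is intuitively obvious from a picture but needs the careful sinusoid-extremum argument above, and one must be scrupulous that the hypotheses "distinct" and "not antipodal" are used precisely to guarantee $0 < |\alpha-\beta| < \pi$ on the relevant arc. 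The exterior-ball fact in the first step is immediate from Proposition \ref{l:prox}(i) once one knows $a,b$ have unique projections, which they do because $0 < d_S(a), d_S(b) < r_S$.
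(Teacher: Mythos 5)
Your first step (unique projections of $a,b$ onto $S$, hence $S\cap\bigl(B_r(a)\cup B_r(b)\bigr)=\emptyset$) is fine, but the whole argument then rests on the Euclidean inclusion $B_r(y)\subseteq B_r(a)\cup B_r(b)$ for $y$ on the short arc, and that inclusion is false whenever $y\notin\{a,b\}$. Take $p=0$ and $z:=(2-\epsilon)\,y$ with $\epsilon>0$ small: then $|z-y|=(1-\epsilon)r<r$, so $z\in B_r(y)$, while
\[
|z-a|^2=r^2\bigl[(2-\epsilon)^2+1-2(2-\epsilon)\cos\theta\bigr]\ \longrightarrow\ r^2(5-4\cos\theta)>r^2
\qquad(\epsilon\to 0),
\]
where $\theta\in(0,\pi)$ is the angle at $p$ between $y$ and $a$; the same holds for $b$. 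So the cap of $B_r(y)$ lying radially beyond $y$ (near distance $2r$ from $p$) is covered by neither disk. Your sinusoid argument fails at exactly this point: the endpoint-maximum property of $\phi\mapsto|z-re^{i\phi}|$ on the short arc only gives $|z-y|\le\max(|z-a|,|z-b|)$, which is useless, whereas what you need is $\min(|z-a|,|z-b|)\le|z-y|$, and for $z$ as above the minimum over the arc is attained at the interior point $y$, not at an endpoint.

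The gap is not repairable with the same inputs, because the lemma does not follow from ``$S$ is connected, contains $p$, and avoids $B_r(a)\cup B_r(b)$'' alone: a connected set through $p$ can wander around both disks and re-enter the uncovered cap of $B_r(y)$, forcing $d_S(y)<r$; only the positive-reach hypothesis, used quantitatively away from $a$ and $b$, excludes this. That is how the paper argues: it parametrizes by arclength the component of $\partial S_r$ through $a$, uses the Lipschitz continuity of $\pi_S$ on $\{0<d_S<r\}$ (Proposition \ref{l:prox}(ii)) to show that $\pi_S\equiv p$ on a definite initial piece of that curve, so that this piece is an arc of $\partial B_r(p)$, and then iterates with a step size bounded below (via the $L^\infty$ bound on the curvature of $\partial S_r$) until the arc reaches $b$. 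Some propagation argument of this kind, exploiting the Lipschitz projection along $\partial S_r$ rather than only the two exterior balls at $a$ and $b$, is what your proof is missing.
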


\begin{proof}

Consider the cone
%\begin{equation}\label{f:cone}
\[
\Sigma_+:= p + \{\alpha  (a-p) + \beta (b-p) \, : \, \alpha , \beta \geq 0\}.
\]
%\end{equation}
We have to prove that 
%\begin{equation}\label{inclusion}
\[
\left[\partial B_r(p) \cap \Sigma_+\right]  \subseteq C_r(p)\ .
\]
%\end{equation}

We claim that there exists $\delta \in (0, r)$ such that
\begin{equation}
\label{f:pd}
\pi _ S \big ( \partial S_r \cap \Sigma_+ \cap B _ \delta (a) \big ) \subseteq \{p \}\ .
\end{equation}

Since the vectors $a-p$ and $b-p$ are not parallel, we have 
%$\pscal{x-p}{y-p} < r^2$, hence
\[
\epsilon := \left| \frac{a+b}{2} - p\right| > 0.
\]
By the definition of $\epsilon$, we have
$$\big [ \big (B_{\epsilon}(p)\setminus \{p \} \big ) \cap \Sigma_+ \big ] \subset \big [{B}_r(a)\cup{B}_r(b)\big ]\ .$$
Recalling that by construction $B _r (a)$ and $B _r (b)$ cannot intersect $S$, we infer that
\begin{equation}\label{e1}
\big [ B_{\epsilon}(p) \cap \Sigma_+ \cap S \big ] = \{ p \}\ .
\end{equation}

Now we recall that the projection map $\pi_S$ is Lipschitz continuous on $\partial S_r$ 
with constant $C := r_S/(r_S -r)$ ({\it cf.}\ Proposition \ref{l:prox} (ii)). 
Therefore, 
if we choose $\delta := \epsilon /C$
we get
\begin{equation}\label{e2}
\pi _ S \big ( \partial S_r \cap \Sigma_+ \cap B _\delta (a)  ) \subset B _\epsilon (p)\,.
\end{equation} 
%Observe that we can choose $\delta = \epsilon / C$, where $C$
%is the the Lipschitz constant of $\pi_S$ on $\partial S_r$.
By (\ref{e1}) and (\ref{e2}) 
we conclude that \eqref{f:pd} holds, proving the claim.

\smallskip
Since $\partial S_r$ is tangent to $\partial B_r(p)$ at $a$,
it is not restrictive to assume that the arc-length
parametrization $\gamma\colon [0, L]\to\mathbb{R}^2$
of the connected component of $S_r$ containing $a$ satisfies
$\gamma(0) = a$ and $\gamma(s) \in \Sigma_+$ for
$s\geq 0$ small enough.
Let
\[
\bar{s} := \sup\left\{
s>0:\ \gamma([0,s])\subset\Sigma_+
\right\}\,.
\]
Clearly we have $0 < \bar{s} < L$.
Let $s_1 >0$ be such that $\gamma(s)\in B_{\delta}(a)\cap\Sigma_+$
for every $s\in [0, s_1]$.
From \eqref{f:pd} we deduce that
\[
\pi_S(\gamma(s)) = \{p\}\quad
\forall s\in [0, s_1]\,,
\]
hence the restriction of $\gamma$ to $[0,s_1]$
parametrizes an arc of length $r\, s_1$
on $\partial B_r(p)$
joining $a$ to $\gamma(s_1)$.
Thus, if $s_1 = \bar{s}$,
then $\gamma(s_1) = b$ and we are done.

Otherwise, denoting by $K$ the $L ^ \infty$-norm of
the curvature of $\gamma$ (which only depends on $r$ and $r_S$, again thanks to Proposition \ref{l:prox} (ii)), we observe that 
we can choose
\[
s_1 \geq \min\left\{\delta,\, \frac{\pi}{K} \right\}\ ,
\]
as  $\delta$ is the shortest possible exit-time from $B _\delta (a)$ and $\frac{\pi}{K}$ the shortest possible exit-time from $\Sigma_+$. 
%unless 
%\begin{equation}
%\label{f:exit}
%\gamma(s_0)\in \Sigma\quad
%\text{and}\quad 
%\gamma(s_0 + t) \not\in\Sigma\quad
%\text{for $t>0$ small}.
%\end{equation}
{}

Hence, we can repeat the same argument replacing the point $a$ by 
$a' = \gamma(s_1)$, after noticing that 
\[
\left| \frac{a'+b}{2} - p\right| > \epsilon
\]
and so \eqref{f:pd} holds with $a$ replaced by $a'$ and the
same value of $\delta$.

In a finite number of steps we can construct numbers
$0 = s_0 < s_1 < \ldots < s_N = \bar{s}$ with
\[
s_{j} - s_{j-1} \geq
\min\left\{\delta,\, \frac{ \pi}{K}\right\},
\quad
\forall j = 1, \ldots, N-1
\]
such that 
the restriction of $\gamma$ to $[s_{j-1},s_j]$
is a parametrization of an arc of length $r\, (s_j - s_{j-1})$
on $\partial B_r(p)$
joining $\gamma(s_{j-1})$ to $\gamma(s_j)$,
and $\gamma(s_N) = \gamma(\bar{s}) = y$,
completing the proof.
\end{proof}

\begin{lemma}
\label{l:conv}
Let $S \subset \R^2$ satisfy $(H1)$ and let $r \in (0, r_S)$.
Let $p\in S$, and assume that $S \neq \{p\}$. If $C _r(p)$ contains a nontrivial arc, 
then $C_r(p)$ is a  connected arc of length $\leq \pi r$.  
\end{lemma}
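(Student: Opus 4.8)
The goal is to upgrade Lemma~\ref{l:ang} from a statement about pairs of points to a global structure statement: once $C_r(p)$ contains one nontrivial arc, it cannot ``spread around'' the whole circle $\partial B_r(p)$, and in fact it is a single arc of length at most $\pi r$. The plan is to argue by contradiction, supposing either that $C_r(p)$ has two connected components, or that it is an arc of length $> \pi r$, and to derive in either case that the maximal disk $B_r(p)$ is in fact not maximal, or that $S$ fails to have empty interior, or that the exterior sphere property is violated --- any of which contradicts (H1) together with Remark~\ref{r:two}.

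\textbf{Step 1: reduce to a single arc.} First I would show $C_r(p)$ is connected. Suppose it contains a nontrivial arc $\Gamma$ and a point (or arc) lying in a different connected component. Pick an endpoint $a$ of $\Gamma$ and a point $b$ outside the closure of $\Gamma$. If $a$ and $b$ are not antipodal, Lemma~\ref{l:ang} forces $C_r(p)$ to contain the short arc joining them, which together with $\Gamma$ would connect the two components --- contradiction. So the only obstruction is that every point of $C_r(p)$ outside $\overline\Gamma$ is antipodal to every endpoint of every arc; since $\Gamma$ is nontrivial it has two distinct endpoints $a_1\ne a_2$, and a single point $b$ cannot be antipodal to both. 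Hence $C_r(p)$ is connected, so it is a single closed arc (possibly all of $\partial B_r(p)$), using Remark~\ref{r:two}.

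\textbf{Step 2: rule out length $> \pi r$.} Now suppose the arc $C_r(p) = \overline\Gamma$ has length $> \pi r$. Then $\Gamma$ contains a pair of antipodal points $a$, $-a$ (reflected through $p$): indeed an arc longer than a semicircle always does. Consider the two ``caps'' of $\partial S_r$: near each $y \in \Gamma$, $\partial S_r$ is tangent to $\partial B_r(p)$ from outside, so $\partial S_r$ stays outside $B_r(p)$; thus the whole boundary arc of $\partial S_r$ between the two antipodal contact points, on the side of $\Gamma$, together with the arc $\Gamma$ itself, encloses a region containing a full diameter $[a,-a]$ of $B_r(p)$ in its interior. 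I would then translate the disk $B_r(p)$ slightly in the direction orthogonal to the chord $[a,-a]$: because the arc $C_r(p)$ covers more than a semicircle, the translated disk is still contained in $S_r$ --- the contact with $\partial S_r$ only pinches on one side --- and one can push further to get a strictly larger disk in $S_r$, contradicting maximality (Remark~\ref{r:two} says $B_r(p)$ is a maximal disk in $S_r$). Alternatively, and perhaps more cleanly, since $d_S$ restricted to a small segment crossing $p$ transverse to the arc would have to be $\geq$ distances computed against the far side of the contact arc, one finds an open neighborhood of $p$ contained in $S$, contradicting the empty-interior hypothesis; I would decide between these two formulations when writing the details.

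\textbf{Main obstacle.} The delicate point is Step~2: making rigorous the intuitive statement that ``a contact arc longer than a semicircle forces the disk to be non-maximal (or $p$ to be interior to $S$).'' The subtlety is that $\partial S_r$ is only $C^{1,1}$ and may coincide with $\partial B_r(p)$ exactly along $\Gamma$, so one cannot simply translate and gain strict containment by a naive compactness argument; one must exploit carefully that $\partial S_r \cap B_r(p) = \emptyset$ (which follows from $B_r(p) \subset S_r$ being maximal and $S$ being where $d_S=0$), control what happens at the two endpoints of $\Gamma$ (where $\partial S_r$ peels away from $\partial B_r(p)$), and check that the peeling happens on the ``correct'' side so that a perturbed disk still fits. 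I expect to use Lemma~\ref{l:ang} once more here, applied to the antipodal-ish pairs near the endpoints, to pin down the local geometry of $\partial S_r$ at the endpoints of the contact arc. The bound $\pi r$ on the length then comes out exactly as the threshold past which the translation/enlargement trick succeeds.
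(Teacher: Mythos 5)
Your Step 1 (connectedness of $C_r(p)$) is essentially the paper's own argument and is fine. The genuine gap is in Step 2, which you leave open and for which both routes you sketch are problematic. The translation/enlargement idea has the geometry backwards: if the contact arc covers \emph{more} than a semicircle, then for every unit direction $v$ there is a contact point $y\in C_r(p)$ whose outward normal $(y-p)/r$ has positive scalar product with $v$, so any small translate of $\overline B_r(p)$ in the direction $v$ crosses $\partial S_r$ near $y$; a disk whose contact set exceeds a semicircle is thus \emph{robustly} maximal, and no contradiction with maximality (Remark~\ref{r:two}) can be produced this way --- the translation trick works only in the opposite regime, when the contact set lies strictly inside an open half-circle. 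The alternative ``$p$ is interior to $S$'' contradiction is not substantiated either: a contact arc of length $>\pi r$ does not by itself give a neighborhood of $p$ inside $S$, and the extreme case $C_r(p)=\partial B_r(p)$ is perfectly compatible with (H1) (it occurs when $S=\{p\}$), which is why the hypothesis $S\neq\{p\}$ must be used somewhere --- and it never enters your Step 2.

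The missing idea is much simpler: apply Lemma~\ref{l:ang} once more, to the two \emph{endpoints} $a,b$ of the arc. If the arc had length $>\pi r$, then $a$ and $b$ are joined on the other side by the complementary arc of length $<\pi r$, hence they are not antipodal, and Lemma~\ref{l:ang} forces that complementary arc into $C_r(p)$ as well; therefore $C_r(p)=\partial B_r(p)$. Since $S_r$ is open and the full circle $\partial B_r(p)$ lies in $\partial S_r$, the circle disconnects $S_r$, and $S$ (connected, contained in $S_r$, containing $p$) must lie in $B_r(p)$; but then every $y\in\partial B_r(p)$ has unique projection $p$ onto $S$ (as $0<d_S(y)=r<r_S$), while any $q\in S\setminus\{p\}$ would be strictly closer than $r$ to the point $p+r\,(q-p)/|q-p|$ of the circle, so $S=\{p\}$, contradicting the hypothesis. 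This is exactly the paper's Step 2; as written, your proposal does not establish the length bound $\leq\pi r$.
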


\begin{proof}
We first prove, arguing by contradiction, that $C_r(p)$ consists of only one connected component.
Let $\Gamma$ be the connected component of $C_r(p)$ containing
the nontrivial arc (so that $\Gamma$ itself is a nontrivial arc),
and let 
$a\in C_r(p)\setminus\Gamma$ be a point lying in
another connected component of $C_r(p)$.
Clearly, there is at least one endpoint $b$ of $\Gamma$
such that $a$ and $b$ are not antipodal, so that
by Lemma~\ref{l:ang} we get the contradiction. 

It remains to prove that, if $S\neq \{p\}$, then the
length of $\Gamma$ is $\leq \pi r$.
Namely, if this is not the case, by Lemma \ref{l:ang} it turns out that $C_r(p)$ contains also $\partial B _ r (p ) \setminus \Gamma$. 
Thus $C_r (p) $ contains the whole circumference $\partial B _ r (p)$. 
Since $S$ is connected, this means that $S = \{p\}$, against the assumption. 
%we can find three points $x,y,z\in C(p)$
%such that $p$ is an interior point of the convexification
%of $\{x, y, z\}$. 
%Then, for $\varepsilon>0$ sufficiently small, it holds
%\[
%B _\varepsilon (p) \subset\big [ B_r(x) \cup B_r(y) \cup B_r(z)\big ]\, .\]
%Since by construction we have
%\[
%S\cap  \big [B_r(x) \cup B_r(y) \cup B_r(z)\big ]= \emptyset\, ,
%\]
%we deduce that $p$ is an isolated point of $S$. Since $S$ is connected, this means that $S = \{p\}$, against the assumption. 
\end{proof}

\bigskip
We are now ready to give the complete picture of $C_r (p)$: 

\begin{proposition}
\label{p:reg} Let $S \subset \R^2$ satisfy $(H1)$ and let $r \in (0, r_S)$.
Let $p\in S$, and assume that $S \neq \{p\}$.
Then $C_r(p)$ consists either of only two antipodal points, or of a closed semicircumference. 
\end{proposition}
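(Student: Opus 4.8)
The plan is to combine the two previous lemmas with a dichotomy based on whether $C_r(p)$ contains a nontrivial arc. By Remark \ref{r:two} we already know that $C_r(p)$ has at least two points. First I would dispose of the case in which $C_r(p)$ contains a nontrivial arc: by Lemma \ref{l:conv}, $C_r(p)$ is then a single connected arc $\Gamma$ of length $\leq \pi r$. The goal is to upgrade this to: the length is \emph{exactly} $\pi r$, i.e.\ $\Gamma$ is a closed semicircumference. Suppose instead the length is strictly less than $\pi r$. Then the two endpoints $a, b$ of $\Gamma$ are distinct and not antipodal, so Lemma \ref{l:ang} forces $C_r(p)$ to contain the shorter open arc between them as well --- but that shorter arc is exactly $\Gamma$, so no contradiction yet. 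The real point is to look just \emph{outside} $\Gamma$: pick a point $c$ on $\partial B_r(p)$ slightly beyond one endpoint $a$ of $\Gamma$, still with $c$ not antipodal to the \emph{other} endpoint $b$. I would argue that $c$ must in fact lie in $C_r(p)$ by a continuity/openness argument on $\partial S_r$ near $a$, contradicting that $a$ is an endpoint of the connected component $\Gamma$; this pins the length down to $\pi r$.

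\textbf{The continuity argument at an endpoint.} Here is the mechanism I expect to use, which is essentially already present inside the proof of Lemma \ref{l:ang}. Let $a$ be an endpoint of $\Gamma$ with $\Gamma$ of length $<\pi r$, and let $b$ be the other endpoint. Since $a \in \partial S_r$ and $\partial S_r \in C^{1,1}$ (Proposition \ref{l:prox} (iv)), near $a$ the boundary $\partial S_r$ is a $C^{1,1}$ curve tangent to $\partial B_r(p)$ at $a$; parametrize it by arclength $\gamma$ with $\gamma(0)=a$, choosing the direction so that $\gamma(s)$ moves away from $\Gamma$ for $s>0$. As long as $\gamma(s)$ stays close to $a$, the projection $\pi_S(\gamma(s))$ stays close to $p$ by the Lipschitz bound in Proposition \ref{l:prox} (ii), and then (reusing the computation in Lemma \ref{l:ang}, which only needed that the relevant region avoids $S$ except at $p$) one gets $\pi_S(\gamma(s)) = \{p\}$ for $s \in [0, s_1]$ with a uniform $s_1>0$ depending only on $r, r_S$. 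But $\pi_S(\gamma(s)) = p$ means $|\gamma(s) - p| = r$, i.e.\ $\gamma(s) \in C_r(p)$, so $\gamma([0,s_1])$ is an arc of $\partial B_r(p)$ through $a$ extending $\Gamma$ strictly beyond its endpoint --- contradicting maximality of the connected component $\Gamma$. Hence the length of $\Gamma$ cannot be $<\pi r$, so it equals $\pi r$ and $\Gamma$ is a closed semicircumference.

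\textbf{The remaining case and the main obstacle.} It remains to treat the case in which $C_r(p)$ contains \emph{no} nontrivial arc, i.e.\ $C_r(p)$ is totally disconnected (a nonempty closed set whose components are all singletons). The claim is that then $C_r(p)$ consists of exactly two antipodal points. First, if $C_r(p)$ contained two distinct points that are not antipodal, Lemma \ref{l:ang} would put a whole nontrivial subarc into $C_r(p)$, contradiction; so any two points of $C_r(p)$ are antipodal, which (on a circle) forces $|C_r(p)| \leq 2$, and with Remark \ref{r:two} giving $|C_r(p)| \geq 2$ we conclude $C_r(p)$ is a pair of antipodal points. I expect the main obstacle to be making the endpoint-continuity step fully rigorous: one has to be careful that the arclength parametrization of $\partial S_r$ at $a$ genuinely proceeds \emph{outward} from $\Gamma$ (using the $C^1$ tangency $\partial S_r \parallel \partial B_r(p)$ at $a$), and that the region swept by $\gamma$ on the outward side near $a$ still lies in a cone/ball on which the arguments of Lemma \ref{l:ang} --- in particular the exclusion of $S$ other than $p$, coming from the disks $B_r(\cdot)$ not meeting $S$ --- remain valid. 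Apart from that, everything reduces cleanly to Lemmas \ref{l:ang} and \ref{l:conv} and Remark \ref{r:two}.
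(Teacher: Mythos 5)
Your reduction of the problem (dichotomy on whether $C_r(p)$ contains a nontrivial arc, the totally disconnected case handled via Lemma~\ref{l:ang} plus Remark~\ref{r:two}, and Lemma~\ref{l:conv} giving a single arc of length $\leq \pi r$ otherwise) matches the paper. The gap is in the only step that carries real content: your claim that, at an endpoint $a$ of the contact arc $\Gamma$, the arclength parametrization $\gamma$ of $\partial S_r$ moving \emph{away} from $\Gamma$ still satisfies $\pi_S(\gamma(s))=\{p\}$ on a uniform interval $[0,s_1]$. This is false, and it is not a technicality. The exclusion step you want to "reuse" from Lemma~\ref{l:ang} (the inclusion \eqref{f:pd}, resting on \eqref{e1}) works only inside the cone $\Sigma_+$ spanned by the directions $a-p$ and $b-p$ of \emph{two} contact points, because near $p$ that cone minus $\{p\}$ is covered by $B_r(a)\cup B_r(b)$, which avoid $S$. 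On the outward side of the endpoint $a$ there is no second contact point providing such a covering ball, and nothing prevents $S$ from accumulating at $p$ in those directions; in fact it must: since $a$ is an endpoint of $C_r(p)$, for small $s>0$ one has $\gamma(s)\in\partial S_r\setminus \overline B_r(p)$, so $d_S(\gamma(s))=r<|\gamma(s)-p|$ and hence $\pi_S(\gamma(s))\neq p$, while $\pi_S(\gamma(s))\to p$ by continuity. So the projection slides off $p$ \emph{immediately} past the endpoint, and no contradiction with the maximality of the component $\Gamma$ arises. (A concrete illustration of the mechanism: if $S$ is a segment and $p$ one of its endpoints, $C_r(p)$ is a semicircle and just beyond its endpoints the projections of $\partial S_r$ move along the segment away from $p$; continuity alone can never force the projection to persist at $p$.)

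What your argument is missing is exactly what the paper's proof supplies. The observation above is turned around into the Claim: at each endpoint ($a$ and $b$, assumed non-antipodal) the curve $\pi_S(\gamma(s))$ traces a nontrivial arc of $S$ through $p$ lying in a cone $\Sigma_a$ (resp.\ $\Sigma_b$) with axis orthogonal to $a-p$ (resp.\ $b-p$) and small half-width $\epsilon$. Then the union of the balls $B_r(\pi_S(\gamma(s)))$ centered along these two arcs, together with $B_r(p)$, covers a region $T\subset S_r$ (two ball-plus-rectangle regions $T_\pm$) whose boundary near $p$ is strictly farther than $r$ in the gap direction: for $p_\lambda=p+\lambda e_1$ with $\lambda>0$ small, $d_{\partial S_r}(p_\lambda)\geq d_{\partial T}(p_\lambda)=r+\lambda\sin(\theta_0-\epsilon)>r$, which contradicts $d_{\partial S_r}(p_\lambda)=r-d_S(p_\lambda)\leq r$ coming from \eqref{ugdist}. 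This quantitative "thickening" argument, not a continuity/openness argument at the endpoint, is what rules out non-antipodal endpoints; your proposal as it stands does not close that case.
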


\proof
By Remark~\ref{r:two}, 
we know that $C_r(p)$ contains at least two points.
Assume that $C_r(p)$ does not contain only two antipodal points.
Then, by Lemma~\ref{l:ang},  $C_r(p)$ contains a nontrivial arc. 
In turn, by Lemma~\ref{l:conv}, this implies that $C_r(p)$ is a connected arc of length $\leq \pi r$.  
We have to show that such arc is precisely a semicircumference.

We argue by contradiction:
let $a,b$ be the endpoints of  $C_r(p)$ and assume by contradiction 
that $a$ and $b$ are not antipodal.  Then, there exists
$\theta_0\in (0, \pi/2)$ so that
the angle in $(0, \pi)$ formed by $a-p$ and $b-p$
is $\pi - 2\theta_0$.
We first prove the following

\medskip
{\it Claim: There exist two cones $\Sigma_a$ and $\Sigma _b$, with vertex in $p$, axis orthogonal to $a-p$ and $b-p$ respectively, 
direction such that $\Sigma _ a \cap C_r(p) = \Sigma _ b \cap C_r (p ) = \emptyset$, 
and half-width $\epsilon < \min \{ \theta _0, \frac{\pi}{2} - \theta _0 \}$, such that
both $\Sigma_a$ and $\Sigma _b$ contain a nontrivial arc of $S$ passing through $p$.}

\medskip

%%%%%%%%%%%%%%%%%%%%%%%%%%%%%%%%%%%%%%%%%%%%%%%%%%%%
\begin{figure}[htbp]
\begin{minipage}{0.5\linewidth}
\centering
\def\svgwidth{7cm}   
% inserire il file
%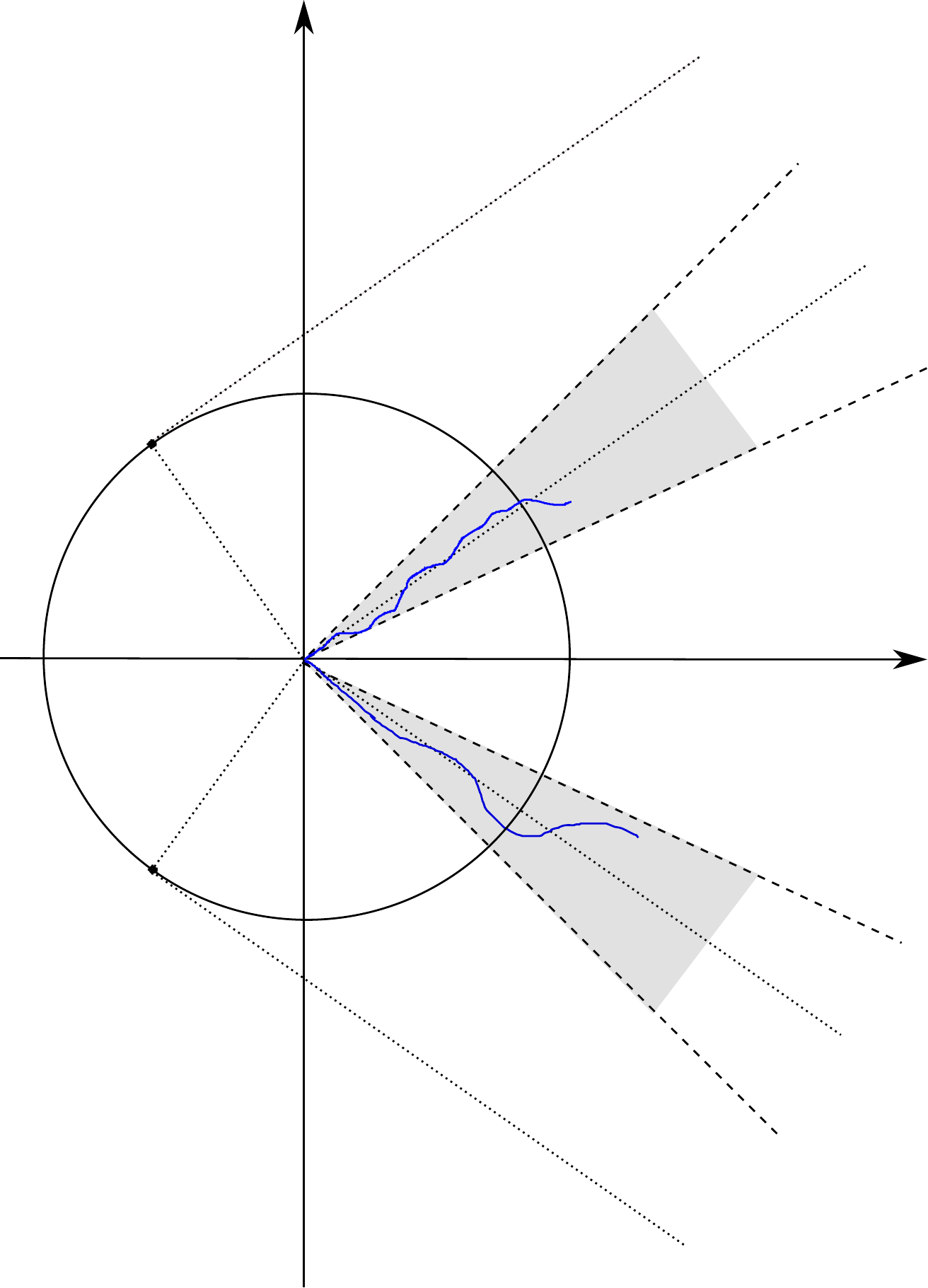  
%% Creator: Inkscape 0.48.2, www.inkscape.org
%% PDF/EPS/PS + LaTeX output extension by Johan Engelen, 2010
%% Accompanies image file 'fig-preg-b.pdf' (pdf, eps, ps)
%%
%% To include the image in your LaTeX document, write
%%   \input{<filename>.pdf_tex}
%%  instead of
%%   \includegraphics{<filename>.pdf}
%% To scale the image, write
%%   \def\svgwidth{<desired width>}
%%   \input{<filename>.pdf_tex}
%%  instead of
%%   \includegraphics[width=<desired width>]{<filename>.pdf}
%%
%% Images with a different path to the parent latex file can
%% be accessed with the `import' package (which may need to be
%% installed) using
%%   \usepackage{import}
%% in the preamble, and then including the image with
%%   \import{<path to file>}{<filename>.pdf_tex}
%% Alternatively, one can specify
%%   \graphicspath{{<path to file>/}}
%% 
%% For more information, please see info/svg-inkscape on CTAN:
%%   http://tug.ctan.org/tex-archive/info/svg-inkscape
%%
\begingroup%
  \makeatletter%
  \providecommand\color[2][]{%
    \errmessage{(Inkscape) Color is used for the text in Inkscape, but the package 'color.sty' is not loaded}%
    \renewcommand\color[2][]{}%
  }%
  \providecommand\transparent[1]{%
    \errmessage{(Inkscape) Transparency is used (non-zero) for the text in Inkscape, but the package 'transparent.sty' is not loaded}%
    \renewcommand\transparent[1]{}%
  }%
  \providecommand\rotatebox[2]{#2}%
  \ifx\svgwidth\undefined%
    \setlength{\unitlength}{377.58676bp}%
    \ifx\svgscale\undefined%
      \relax%
    \else%
      \setlength{\unitlength}{\unitlength * \real{\svgscale}}%
    \fi%
  \else%
    \setlength{\unitlength}{\svgwidth}%
  \fi%
  \global\let\svgwidth\undefined%
  \global\let\svgscale\undefined%
  \makeatother%
  \begin{picture}(1,1.38846134)%
    \put(0,0){\includegraphics[width=\unitlength]{fig-preg-b.pdf}}%
    \put(0.12027376,0.9192436){\color[rgb]{0,0,0}\makebox(0,0)[lb]{\smash{$a$}}}%
    \put(0.11478486,0.40438405){\color[rgb]{0,0,0}\makebox(0,0)[lb]{\smash{$b$}}}%
    \put(0.70539138,0.9280258){\color[rgb]{0,0,0}\makebox(0,0)[lb]{\smash{$\Sigma_a$}}}%
    \put(0.69880452,0.38791724){\color[rgb]{0,0,0}\makebox(0,0)[lb]{\smash{$\Sigma_b$}}}%
  \end{picture}%
\endgroup%
\end{minipage}%
\begin{minipage}{0.5\linewidth}
\centering
\def\svgwidth{7cm}   
% inserire il file
%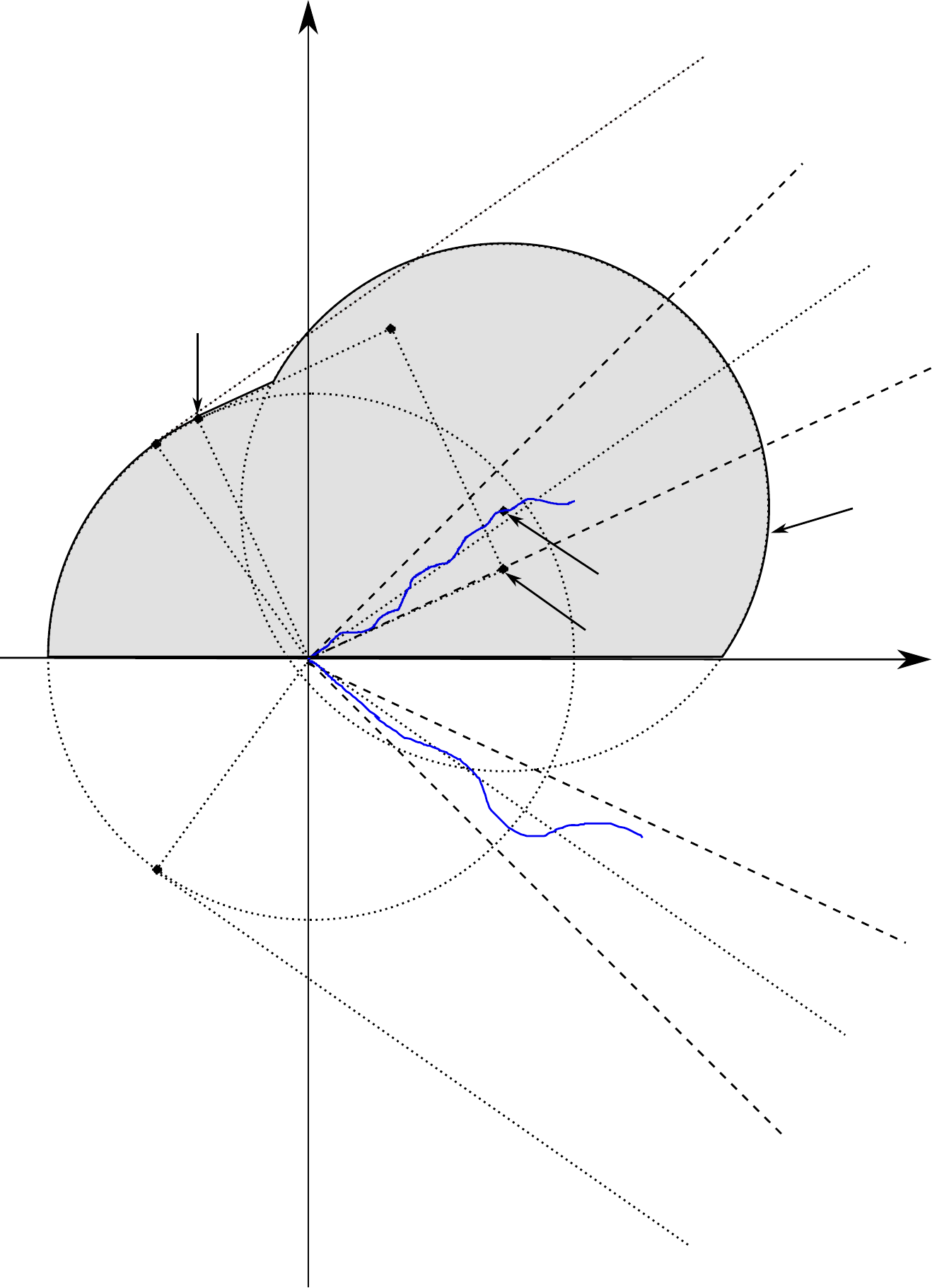  
%% Creator: Inkscape 0.48+devel, www.inkscape.org
%% PDF/EPS/PS + LaTeX output extension by Johan Engelen, 2010
%% Accompanies image file 'fig-preg-c.pdf' (pdf, eps, ps)
%%
%% To include the image in your LaTeX document, write
%%   \input{<filename>.pdf_tex}
%%  instead of
%%   \includegraphics{<filename>.pdf}
%% To scale the image, write
%%   \def\svgwidth{<desired width>}
%%   \input{<filename>.pdf_tex}
%%  instead of
%%   \includegraphics[width=<desired width>]{<filename>.pdf}
%%
%% Images with a different path to the parent latex file can
%% be accessed with the `import' package (which may need to be
%% installed) using
%%   \usepackage{import}
%% in the preamble, and then including the image with
%%   \import{<path to file>}{<filename>.pdf_tex}
%% Alternatively, one can specify
%%   \graphicspath{{<path to file>/}}
%% 
%% For more information, please see info/svg-inkscape on CTAN:
%%   http://tug.ctan.org/tex-archive/info/svg-inkscape
%%
\begingroup%
  \makeatletter%
  \providecommand\color[2][]{%
    \errmessage{(Inkscape) Color is used for the text in Inkscape, but the package 'color.sty' is not loaded}%
    \renewcommand\color[2][]{}%
  }%
  \providecommand\transparent[1]{%
    \errmessage{(Inkscape) Transparency is used (non-zero) for the text in Inkscape, but the package 'transparent.sty' is not loaded}%
    \renewcommand\transparent[1]{}%
  }%
  \providecommand\rotatebox[2]{#2}%
  \ifx\svgwidth\undefined%
    \setlength{\unitlength}{379.39037574bp}%
    \ifx\svgscale\undefined%
      \relax%
    \else%
      \setlength{\unitlength}{\unitlength * \real{\svgscale}}%
    \fi%
  \else%
    \setlength{\unitlength}{\svgwidth}%
  \fi%
  \global\let\svgwidth\undefined%
  \global\let\svgscale\undefined%
  \makeatother%
  \begin{picture}(1,1.3818825)%
    \put(0,0){\includegraphics[width=\unitlength]{fig-preg-c.pdf}}%
    \put(0.06182972,1.09129171){\color[rgb]{0,0,0}\makebox(0,0)[lt]{\begin{minipage}{0.02191506\unitlength}\raggedright \end{minipage}}}%
    \put(0.03287519,0.32212727){\color[rgb]{0,0,0}\makebox(0,0)[lt]{\begin{minipage}{0.06214957\unitlength}\raggedright \end{minipage}}}%
    \put(0.12446645,0.9149059){\color[rgb]{0,0,0}\makebox(0,0)[lb]{\smash{$a$}}}%
    \put(0.11900365,0.40249398){\color[rgb]{0,0,0}\makebox(0,0)[lb]{\smash{$b$}}}%
    \put(0.65300055,0.75952782){\color[rgb]{0,0,0}\makebox(0,0)[lb]{\smash{$p'$}}}%
    \put(0.170649,1.04229954){\color[rgb]{0,0,0}\makebox(0,0)[lb]{\smash{$p_1$}}}%
    \put(0.63518144,0.69345933){\color[rgb]{0,0,0}\makebox(0,0)[lb]{\smash{$q$}}}%
    \put(0.43761788,1.02773555){\color[rgb]{0,0,0}\makebox(0,0)[lb]{\smash{$q_1$}}}%
    \put(0.87942723,0.85153719){\color[rgb]{0,0,0}\makebox(0,0)[lb]{\smash{$T_+$}}}%
  \end{picture}%
\endgroup%
\end{minipage}
\caption{Proof of Proposition~\ref{p:reg}}   
\label{fig:rectangle}
\end{figure}
%%%%%%%%%%%%%%%%%%%%%%%%%%%%%%%%%%%%%%%%%%%%%%%%%%%%

To prove the claim, we can assume
without loss of generality that 
\[
\begin{array}{ll}
& p=(0,0)
\\ \noalign{\medskip}
& a= \big (r\cos (\theta _0 + \frac{\pi}{2} ), r\sin(\theta _0 + \frac{\pi}{2} ) \big ) 
\\ \noalign{\medskip}
& b= \big (r\cos(\theta _0 + \frac{\pi}{2} ), -r\sin(\theta _0 + \frac{\pi}{2} )\big )\, .
\end{array}
\]
%Then, throughout the proof, we call  call $2\theta _0$ the angle in $ %\left(0, \pi\right)$ formed by $x-p$ and $y-p$, and 

We choose $\epsilon <\min\big\{\theta _0, \frac{\pi}{2} - \theta _0 \big\},
$
and we define the cones
\[
\begin{array}{ll}
& \Sigma_a := \{(\rho\cos\theta, \rho\sin\theta):\
\rho\geq 0,\ \theta\in [\theta_0-\epsilon, \theta_0+\epsilon]\},
\\ \noalign{\medskip}
& \Sigma_b := \{(\rho\cos\theta, \rho\sin\theta):\
\rho\geq 0,\ \theta\in [-\theta_0-\epsilon, -\theta_0+\epsilon]\}.
\end{array}
\]
By construction, $\Sigma_a$ and $\Sigma _b$ have vertex in $p$, and  axis orthogonal to $a-p$ and $b-p$; 
moreover, by the choice of the width $\epsilon$, $\Sigma _a$ and $\Sigma _b$ are contained respectively in the first and fourth quadrant, 
and in particular $\Sigma _a \cap \Sigma _b = \{p\}$
(see Figure~\ref{fig:rectangle} left). 

Let us show that $\Sigma _a$ 
contains a nontrivial arc of $S$ 
passing through $p $ (being the proof exactly the same for $\Sigma_b$). 

Let  $\gamma$ be an arc-length parametrization of  
the component of $\partial S_r$ containing $a$, such that $\gamma (0) = a$ and $\gamma'(0) = (\cos\theta_0, \sin\theta_0)$.
%$\gamma (\ell) = y$ and 
%$\gamma (s)$ lies outside the cone $\Sigma$ defined by $(\ref{f:cone})$ %for any $s$ such that $0 < s <\!\!<1$ or $0 <\ell-s <\!\!<1 $. 
%Recalling that
%$x$ and $y$ are endpoints of $C(p)$, that $\partial S_r$ is tangent to %$\partial B _r (0)$ at $x, y$, and that $\gamma'$ is continuous, we infer that
%there exists $\delta >0$ such that 
Since $a$ is an end-point of $C_r (p)$ and $\gamma$ is continuous, we infer that
there exists $\delta >0$ such that 
$$%\begin{array}{ll}
\gamma(s) \in (a+\Sigma_a)\setminus\overline{B}_r(p)
% \ \  \hbox{ and } \ \ \langle \gamma' (s), \gamma ' (0) \rangle >0  
%\quad
%\gamma'(s) \in C_x
\qquad \forall s\in(0, \delta).
%\\ 
%\noalign{\medskip}
%\gamma(s) \in (y+\Sigma_y)\setminus\overline{B}_r(p) \ \  \hbox{ and } \ \ 
%\gamma' (s) \in \Sigma _y
%\quad
%\gamma'(s) \in C_x
%\qquad \forall s\in(\ell- \delta, \ell).
%\end{array}
$$

By continuity of the projection map $\pi _S$,
this implies 
$$%\begin{array}{ll}
%&
\pi_S(\gamma(s)) \in \Sigma_a\setminus \{ p \} 
\qquad
\forall s\in(0, \delta).
%\\ 
%\noalign{\medskip}
%& 
%\pi_S(\gamma(s)) \in \Sigma_y\cap B_r(p)
%\setminus\{p\}
%\quad
%\forall s\in(\ell - \delta, \ell)\,.
%\end{array}
$$

%We claim that, in addition, it holds

We conclude that $\pi _ S (\gamma (s))$, for $s \in (0, \delta)$, is a nontrivial arc of $S$ passing through $p$ contained into $\Sigma_a$, and the claim is proved.  

\bigskip
The remaining of the proof is devoted to obtain a contradiction. 
We keep the same coordinates as in the proof of the claim. 
Let $\pi _ S (\gamma (s))$, for $s \in (0, \delta)$ be a nontrivial arc of $S$ passing through $p$ contained into $\Sigma_a$. 
Pick a point in the arc, say 
$$p' = \pi _ S (\gamma (s'))= (x', y')\, , \qquad \hbox{ with } s' \in (0, \delta)\, .$$
Choosing $s'$ sufficiently small, we may assume that $\partial B _r (p)$ and $\partial B _ r (p')$
have two intersection points, one of which lying in the half-plane $\{ y < 0 \}$. 

Set
$$p_ 1 := r\, \Big ( \cos ( \theta _0 - \epsilon  + \frac{\pi}{2} ) , \sin ( \theta _0 - \epsilon  + \frac{\pi}{2} ) \Big )\, ,
\quad q  := (x', \tan (\theta _0 - \epsilon) x' ) \,  , \quad
 q_ 1 := q + p _1\, ,$$
so that the straight line through  $p _1$ and $q _1$ has slope $\theta _0 - \epsilon$ and is tangent to both 
$\partial B _ r (p)$ and $\partial B _ r (q)$, respectively at $p_1$ and $q _1$. 
Denote by $R$ the rectangle with vertices $p$, $p_1$, $q$ and $q_1$. 

Since $B _r (z) \subset S_r$ for every $z \in S$, and since by construction $\pi _ S (\gamma (s)) \subset S \cap \Sigma _a$ for all $s \in (0, s')$, 
we infer that the region 
\[
T _+:= 
\Big \{ (x, y)  \in  \big ( B _ r (p) \cup R \cup B _ r (p' ) \big )\ :\   y \geq 0 \Big \} 
\subset
\bigcup_{s\in [0,s']} B_r\left(
\pi_S(\gamma(s))
\right)
\]
is contained into $S_r$  (see Figure~\ref{fig:rectangle} right).  

By considering a nontrivial arc of $S$ passing through $p$ contained into $\Sigma _b$ and arguing in the same way, we obtain that also the region
$$T _- :=\big  \{ (x, y) \ : (x, -y) \in T _+ \big \}$$
is contained into $S_r$. Hence, the same holds true for the region 
$T:= T _+ \cup T _-$. 

Notice that, by construction (and in particular by the choice of $s'$), the only points $\tilde p\in \partial T$ which realize 
the distance of $p$ from $\partial T$ are those of $C_r (p)$, namely it holds
\begin{equation}\label{distp}
\tilde p \in \partial T\, , \quad | p - \tilde  p | = d_{\partial T} (p)  \ \Leftrightarrow \ \tilde p \in C_r (p) \ .
\end{equation}

\medskip
We now consider the point $p _\lambda := (\lambda, 0)$,   for $\lambda >0$ small. 
Clearly, since $|p _\lambda - p| = \lambda$, as soon as $\lambda < r$ it holds 
\begin{equation}\label{contrad1}
p _\lambda \in S _r\, .
\end{equation} 
On the other hand, by  the inclusion $T \subseteq S_r$, it holds
\[
d_{\partial S_r}(p_{\lambda}) \geq d_{\partial T}(p_{\lambda}) \,.
\]

\medskip
Now, for $\lambda >0$ small, 
\[ 
d_{\partial T}(p_{\lambda}) = r + \lambda\, \sin(\theta_0-\epsilon) > r
\]

\medskip
where the first equality holds in view of (\ref{distp}) and the continuity of $\pi _{\partial T}$, and the second strict inequality holds
recalling that, by the choice of $\epsilon$, the angle $\theta _0 - \epsilon$ belongs to $(0, \pi/2)$. 

We thus have
\begin{equation}\label{contrad2}
d_{\partial S_r}(p_{\lambda})  > r\ . 
\end{equation}

\medskip
Comparing (\ref{contrad1}) and (\ref{contrad2}) we have a contradiction. \qed

\bigskip\bigskip

%By arguing in the similar way with $y$ in place of $x$,we conclude that 
%there exists $\delta_0 > 0$ such that the region
%\[
%A := \{z + t (\cos\theta, \sin\theta):\
%z\in B_r(p),\ t\in [0,\delta_0],\
%|\theta| \leq \theta_0 - \epsilon
%\}
%\] 
%is contained in $S_r$.
%(It is convenient to recall that $\theta_0-\epsilon \geq \theta_0/2$.)
%It follows that, setting $p_{\lambda}:=(\lambda, 0)$,
%for $\lambda > 0$ small one has
%\[
%d_{\partial S_r}(p_{\lambda}) \geq r + \lambda\, \sin(\theta_0-\epsilon) > r,
%\]
%a contradiction.
%\end{proof}

\bigskip

\section{Proofs of the results in Section \ref{intro}.}\label{secproofs}

For convenience, let us prepone the following remark, which will be useful in the proofs of Theorems \ref{t:c11} and \ref{t:c2}.

\begin{remark}\label{r:par}  Let $S\subset \R ^2$ satisfy (H1), and let $r \in (0, r_S)$. Let $\gamma :[0, L] \to \R^2$ be a local arc-length parametrization of $\partial S_r$ with $\gamma (0) \in C_r(p)$, and denote by $\nu$ the unit normal to $\gamma$ obtained by a counterclockwise rotation of $\pi/2$ of the unit tangent to $\gamma$. By Proposition \ref{l:prox} (ii), the function $\gamma$ is twice differentiable a.e.\ on $[0,L]$; moreover, if we  denote by $ \kappa (s)$ the curvature of $\gamma$ at $\gamma (s)$ (intended as $\langle \gamma'', \nu \rangle$), the function $\kappa$ belongs to $L ^ \infty ([0,L])$. 
If we assume without loss of generality that
%\begin{equation}\label{f:par1}
\[
\gamma(0) = 0\, , \quad \gamma'(0) = e_1:=(1,0)\, , \quad
p = (0, r) \,,
\]
%\end{equation}   
and we set
\[ \phi(s) := \int_0^s \kappa(t)\, dt\,,
\qquad \forall  s\in [0,L]\, ,
\]
we can write $\gamma$ under the form
%\begin{equation}\label{f:par2}
\[
\gamma(s) = \left(\int_0^s \cos\phi(t)\, dt\,,\
\int_0^s \sin\phi(t)\, dt\right)\,,
\qquad \forall s\in [0,L]\, .
\]
%\end{equation}
Indeed, one checks immediately that 
\[
\begin{split}
\gamma'(s) &= (\cos\phi(s),\, \sin\phi(s)),\\
\nu (s) & = (-\sin\phi(s),\, \cos\phi(s)),\\
\gamma''(s) & = \phi'(s) \, (-\sin\phi(s),\, \cos\phi(s)) = \kappa(s) \nu (s).
\end{split}
\]
Accordingly, a local parametrization  of $S$ near $p$ is given by
\[
\eta(s) := \gamma(s) + r \nu (s)
= \left(\int_0^s \cos\phi(t)\, dt-r\, \sin\phi(s)\,,\
\int_0^s \sin\phi(t)\, dt + r\, \cos\phi(s)\right)\,.
\]
In particular, 
one has 
\[
\eta'(s) = (1-r\,\phi'(s)) \, (\cos\phi(s), \sin\phi(s))
= \mu(s) \gamma'(s) \qquad \hbox{ for a.e.}\ s \in[0,L]\, ,
\]
where the function $\mu$ is defined by
\begin{equation}\label{f:mu}
\mu(s) := 1 - r\,\kappa(s) 
\qquad \hbox{ for a.e.}\ s \in[0,L].
\end{equation}
Incidentally, it is worth noticing that the function $\mu$ is nonnegative. Indeed, from \cite[Lemmas~2 and~3]{CFa} we have
$$\kappa (s) \lambda _{S_r} (\gamma (s)) \leq 1 \qquad \hbox{ for a.e.}\ s \in[0,L],$$
which implies $\mu (s) \geq 0$ in view of  (\ref{f:lSr}).
\end{remark}

\bigskip

{\bf Proof of Theorem \ref{t:c11}}. 

Assume that $S$ is not a singleton. Fix $r \in (0, r_S)$, and denote by $S ^*$ the set of points $p \in S$ 
such that
$C_r(p)$ is a semicircumference of radius $r$. 
By Proposition \ref{p:reg},  we know that, for every $p\in S\setminus{S^*}$,
$C_r(p)$ contains exactly two antipodal points.
Moreover, we observe that $S^*$ cannot have accumulation points. Indeed, if $\{ p _n \} \subset S ^*$ is a Cauchy sequence, then, for $n$ and $m$ large enough, $C_r (p_n) \cap B _ r (p _m )\neq \emptyset$, against $d _{\partial S_r} (p _m) = r$. 
We divide the remaining part of the proof in two steps. 

\medskip
\textsl{Step 1:
$S$  is  Lipschitz manifold, with the (possibly empty) set $ S^*$ as boundary.}

\medskip
Let $p \in S$. Since $S$ is not a singleton and it is arc-wise connected, there is an arc of $S$ passing  through $p$. 
Moreover, since ${S^*}$ has no accumulation points, for every $p \in \R ^2$ there exists a ball centered at $p$ which does not intersect $S ^* \setminus \{p\}$, i.e.,  there exists
$\delta > 0$ such that 
$${S^*}\cap B_{\delta}(p) = 
\begin{cases}
\{ p \} & \hbox{ if } p \in S ^ * \\ 
\emptyset &  \hbox{ if } p \in S \setminus S ^* \,.
\end{cases}
$$
Let  
$$\gamma: I \to \R ^ 2\, \qquad \hbox{ with }
I = \begin{cases}
[ 0, \epsilon) & \hbox{ if } p \in S ^ * \\ 
( - \epsilon, \epsilon) &  \hbox{ if } p \in S \setminus S ^* , 
\end{cases}
$$
be a local arc-length parametrization of $\partial S_r$ such that $\pi _ S (\gamma (0)) = p$. 

Choosing $\epsilon$ sufficiently small, and setting 
$$\eta (s) := \gamma (s) + r \nu (s)\, , \qquad \widetilde \gamma (s) := \gamma (s) + 2 r \nu (s) \, , $$
by continuity of the projection map $\pi _S$ and by the choice of $\delta$, 
we may assume that
$$\pi _ S (\gamma (s)) \subseteq B _ \delta (p) \quad \hbox { and }  C_r ( \eta (s) ) =   \big \{ \gamma (s), \widetilde  \gamma (s) \big \} \quad \forall s \in \text{int}\,I \, .$$

In particular, $S \cap B _\delta (p)$ is parametrized by the Lipschitz curve $\eta (s)$, for $s \in I$. 
In order to prove Step 1, we have to show that such a Lipschitz curve is actually the graph of a Lipschitz function. 
To that aim, by possibly decreasing the size of $\epsilon$,
we can further assume that, setting
$R := \min\{r, r_S-r\}$, the curves $\gamma$ and $\widetilde \gamma$ satisfy: 
\begin{equation}
\label{f:rad}
|\gamma(s) - \gamma(t)| < R, \quad
|\widetilde{\gamma}(s) - \widetilde{\gamma}(t)| < R,\quad
|\gamma'(s)-\gamma'(t)| < 1/2,
\qquad
\forall s, t \in I.
\end{equation}
Let us show that, as a consequence of (\ref{f:rad}),   if we choose a system of coordinates 
such that $e_1 = \gamma ' (0)$ and $e _ 2 = \nu (0)$, 
the function
$\eta_1$ is invertible with Lipschitz inverse. 
In fact, let us show that $\eta' _ 1 (s) \geq 1/4$ for a.e. $s \in I$. 
Recall from Remark \ref{r:par} that we have
\[
\eta'(s) = \mu(s)\gamma'(s) \qquad \forall \, s \in I\, ,  
\]
with $\mu$ defined by (\ref{f:mu}). 
%Thus, in order to show that $\eta _1$ is invertible, we have to exclude %the vanishing of both $\gamma _1 (s)$ and $\mu (s)$.  
By the third condition in (\ref{f:rad}) we readily obtain
\begin{equation}\label{deri1}
\gamma' _ 1 (s) \geq \gamma ' _ 1 (0) - \frac{1}{2} = \frac{1}{2} \qquad \forall s \in I \, .
\end{equation}

\medskip
On the other hand, we claim that
\begin{equation}\label{prodotto}
\pscal{\widetilde{\gamma}'(s)}{\gamma'(s)} \geq 0\, \qquad \forall s \in I \ :\ \nu \hbox{ is differentiable at } s\,.
\end{equation}
Assume by a moment that \eqref{prodotto} holds true. 
Recalling that $\widetilde{\gamma}'(s) = (1 - 2r \kappa(s))\gamma'(s)$,
we obtain the estimate $1-2r\kappa(s) \geq 0$ and hence
\begin{equation}\label{stimamu}
\mu(s)  \geq \frac{1}{2}\qquad
\text{for a.e.}\ s\in I.
\end{equation}

By (\ref{deri1}) and (\ref{stimamu}) we infer that
$$\eta_1'(s)  \geq \frac{1}{4} \qquad \hbox{ for a.e. } s\in I \,. $$

Therefore, the Lipschitz function
$\eta_1$ is invertible with a Lipschitz inverse $\eta_1 ^ {-1}$. 
%\bem mancano dettagli \enm
Then the support of $\eta$ is the graph of the Lipschitz function
$g(x) := \eta_2(\eta_1 ^ {-1}(x))$ (notice that $g$ is defined on a interval of the type $[a, b)$ in case $p \in S ^ *$ and
on an interval of the type $(a, b)$ in case $p \in S \setminus S ^*$). 

We conclude  that $S$ is a $1$-dimensional compact Lipschitz manifold, and that the boundary of such manifold is given precisely by the (possibly empty) set $S ^*$.

Let us go back to the proof of (\ref{prodotto}), which follows by a simple geometrical argument.

\begin{figure}   
\centering   
\def\svgwidth{5cm}   
% inserire il file
%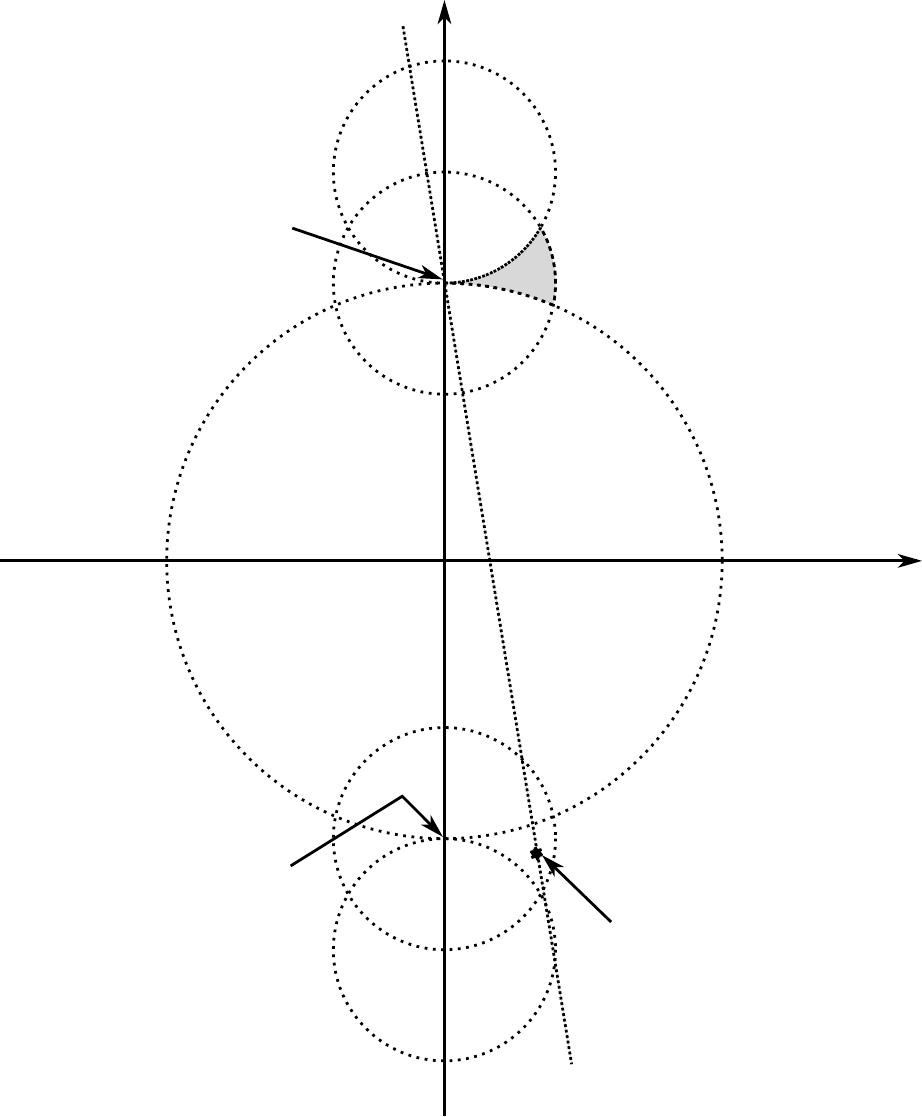  
%% Creator: Inkscape 0.48.2, www.inkscape.org
%% PDF/EPS/PS + LaTeX output extension by Johan Engelen, 2010
%% Accompanies image file 'fig-thm11.pdf' (pdf, eps, ps)
%%
%% To include the image in your LaTeX document, write
%%   \input{<filename>.pdf_tex}
%%  instead of
%%   \includegraphics{<filename>.pdf}
%% To scale the image, write
%%   \def\svgwidth{<desired width>}
%%   \input{<filename>.pdf_tex}
%%  instead of
%%   \includegraphics[width=<desired width>]{<filename>.pdf}
%%
%% Images with a different path to the parent latex file can
%% be accessed with the `import' package (which may need to be
%% installed) using
%%   \usepackage{import}
%% in the preamble, and then including the image with
%%   \import{<path to file>}{<filename>.pdf_tex}
%% Alternatively, one can specify
%%   \graphicspath{{<path to file>/}}
%% 
%% For more information, please see info/svg-inkscape on CTAN:
%%   http://tug.ctan.org/tex-archive/info/svg-inkscape
%%
\begingroup%
  \makeatletter%
  \providecommand\color[2][]{%
    \errmessage{(Inkscape) Color is used for the text in Inkscape, but the package 'color.sty' is not loaded}%
    \renewcommand\color[2][]{}%
  }%
  \providecommand\transparent[1]{%
    \errmessage{(Inkscape) Transparency is used (non-zero) for the text in Inkscape, but the package 'transparent.sty' is not loaded}%
    \renewcommand\transparent[1]{}%
  }%
  \providecommand\rotatebox[2]{#2}%
  \ifx\svgwidth\undefined%
    \setlength{\unitlength}{265.53bp}%
    \ifx\svgscale\undefined%
      \relax%
    \else%
      \setlength{\unitlength}{\unitlength * \real{\svgscale}}%
    \fi%
  \else%
    \setlength{\unitlength}{\svgwidth}%
  \fi%
  \global\let\svgwidth\undefined%
  \global\let\svgscale\undefined%
  \makeatother%
  \begin{picture}(1,1.21093757)%
    \put(0,0){\includegraphics[width=\unitlength]{fig-thm11.pdf}}%
    \put(0.49088393,0.92073045){\color[rgb]{0,0,0}\makebox(0,0)[lb]{\smash{$r$}}}%
    \put(0.49634526,1.03541869){\color[rgb]{0,0,0}\makebox(0,0)[lb]{\smash{$r_S$}}}%
    \put(0.48269193,0.31725179){\color[rgb]{0,0,0}\makebox(0,0)[lb]{\smash{$-r$}}}%
    \put(0.48451235,0.14612965){\color[rgb]{0,0,0}\makebox(0,0)[lb]{\smash{$-r_S$}}}%
    \put(0.67656964,0.19892261){\color[rgb]{0,0,0}\makebox(0,0)[lb]{\smash{$\gamma(t)$}}}%
    \put(0.17321567,0.245344){\color[rgb]{0,0,0}\makebox(0,0)[lb]{\smash{$\gamma(s)$}}}%
    \put(0.16684409,0.98535636){\color[rgb]{0,0,0}\makebox(0,0)[lb]{\smash{$\widetilde{\gamma}(s)$}}}%
  \end{picture}%
\endgroup%
\caption{Proof of \eqref{prodotto}}   
\label{fig-thm11}   
\end{figure}

Namely, let $s\in I$ be fixed so that $\nu$ is differentiable at $s$, and let $t\in I $ denote a generic point, with $t > s$. 
Assume without loss of generality that
$\gamma(s) = (0, -r)$ and $\gamma'(s) = e_1$,
so that $\nu(s) = e_2$, $\eta(s) = 0$,
$\widetilde{\gamma}(s) = (0, r)$
(see Figure~\ref{fig-thm11}, and notice that, to make the remaining of the proof more readable, we are changing system of coordinates with respect to the one chosen above).
Using \eqref{f:rad} and the assumption that $S$ is proximally smooth of radius $r_S$ (and hence $S_r$ is proximally smooth of radius $r_S- r$), 
we get
\begin{equation}
\label{f:region}
\begin{split}
\gamma(t) & \in B_R(0,-r)\setminus \big[B_r(0) \cup B_{r_S - r}(0, -r_S)\big] =:E,\\ \noalign{\smallskip}
\widetilde{\gamma}(t) & \in B_R(0,r)\setminus \big[B_r(0) \cup B_{r_S - r}(0,r_S)\big]= : \widetilde E.
\end{split}
\end{equation}
Indeed, we have $\gamma (t) \in B _R (0, -r)$ by the first condition in \eqref{f:rad}, $\gamma (t) \not \in B _ r (0)$
since $0 \in S$, and finally $\gamma (t) \not \in B_{r_S - r}(0, -r_S)$ by Proposition \ref{l:prox} (iii) combined with the exterior sphere condition recalled in Remark \ref{r:comments} (iv). When $\gamma (t)$ is replaced by  $\widetilde \gamma (t)$, one argues exactly in the same way. 

Notice that, thanks to the inequality $R\leq r$, the regions $E$ and $\widetilde E$ are mutually disjoint. 
%Moreover, thanks to the inequality $R \leq r_S -r$, the segments $[\gamma(s), \widetilde \gamma(s)]$ and
%$[{\gamma}(t), \widetilde{\gamma}(t)]$ lie in the 
%the region
%$\{0 < d_S < r_S\}$. 

\smallskip
We claim that the segments
$[\gamma(s), \widetilde \gamma(s)]$ and
$[{\gamma}(t), \widetilde{\gamma}(t)]$
cannot intersect. % at any internal point.
Namely, assume by contradiction that
%there exists $\alpha \in (-r, r)$ such that
\[
[\gamma(s), \widetilde \gamma(s)] \cap
[{\gamma}(t), \widetilde{\gamma}(t)] = \{q\}.
\]
The case $q = p$ is easily
excluded by the fact that
$C_r(\eta(s)) = \{\gamma(s),\, \widetilde{\gamma}(s)\}$.
On the other hand,
if $p\neq q$,
then $d_S(q) = |q-p| \in (0,r)$, so that $q$ must have
a unique projection onto $S$, in contradiction with the
fact that, by construction, both $p$ and $\eta(t)$
are projections of $q$ onto $S$.

%Since each point of this region must have a unique projection onto $S$, we infer that such two segments
%cannot have any intersection point.
Hence, in our coordinate system, the point
$\widetilde{\gamma}(t)$ must lie on the right side of
the line through $\gamma(t)$ and $\widetilde{\gamma}(s)$; hence, in view of (\ref{f:region}), we infer that $\widetilde{\gamma}(t)$ belongs to the set $\widetilde E \cap \{ x_1 > 0 \}$
(corresponding to the shaded region in 
Figure~\ref{fig-thm11}). We conclude that 
\begin{equation}
\label{f:daunlato}
\pscal{\widetilde{\gamma}(t) - \widetilde{\gamma}(s)}{\gamma'(s)}
>  0\,.
\end{equation}
Differentiating (\ref{f:daunlato}) from the right at $t= s$, we obtain (\ref{prodotto}). 

\bigskip
 
\textsl{Step 2: $S$ is of class $C ^ {1, 1}$.} 

\smallskip By Step 1, we know that near each point $p\in S$, $S$ can be parametrized as the graph
of a Lipschitz function $g$.
Since by assumption $S$ is proximally smooth, both 
the epigraph and the hypograph of $g$ are proximally smooth sets. 
Then, by  \cite[Thm.~5.2]{CSW} and \cite[Thm.~6]{Rock82}, 
$g$ is both lower-$C^2$ and  upper-$C^2$, meaning  that $g (s) = \inf _{\tau \in T} G_1(\tau, s)$
and $g (s)  = \sup _{\tau \in T} G_2 (\tau, s)$, where $G_1, G_2$ are continuous in the variable $\tau$  (belonging to some topological space $T$)
and $C^2$  in the variable $s$. It follows that $g$ is locally both semi-concave and semi-convex (see \cite[Prop.~3.4.1]{CaSi}) and, in turn, 
that $g$ is of class $C ^ {1,1}$ (see \cite[Cor.~3.3.8]{CaSi}). 
\qed

\bigskip

{\bf Proof of Theorem \ref{t:c2}}. 

Assume that $S$ is not a singleton.
By Theorem~\ref{t:c11} 
we know that
$S$ is a $1$-dimensional manifold
of class $C^{1,1}$.
We divide the remaining part of the proof in two steps.

\medskip
\textsl{Step 1: $S$ is a manifold without boundary.}

Namely, assume by contradiction that $S$ is a manifold with
boundary.
Let $p$ be a point of this boundary, and let $r \in (0, r_S)$ be fixed. 
Without loss of generality we can assume that 
$p=(0,r)$ and that $C_r(p)$ is the semicircumference
lying in $\{x\leq 0\}$ with endpoints
$a = (0,0)$ and $b = (0, 2r)$.
Let us consider a parametrization $\gamma$ of the
connected component of $\partial S_r$ containing $C_r(p)$
as in Remark \ref{r:par}.
For every $s$
we have that
\[
p(s) := \gamma(s) + r \nu(s) \in S
\]
is equal to $\pi _S(\gamma(s))$.
Moreover, there exists $s_0>0$ such that
\[
C_r(p(s)) = \{\gamma(s), \gamma(s) + 2r \nu(s)\}\,,
\qquad \forall s\in (0, s_0).
\]
We remark that, for $s\in (0, s_0)$, both points in $C_r(p(s))$
must lie in the half-plane $\{x > 0\}$.
In particular one has
\[
\xi(s) := 
\gamma_1(s) + 2r \nu_1(s) =
\int_0^s \cos\phi(t)\, dt - 2r\, \sin\phi(s) > 0
\qquad\forall s\in (0, s_0).
\]
Since $\xi(0) = 0$, this inequality yields
\[
\xi'(0) = 1 - 2r\kappa(0) \geq 0,
\]
that is, $\kappa(0) \leq 1/(2r)$.
On the other hand, since $S_r$ is of class $C^k$ with $k \geq 2$
(see Proposition~\ref{l:prox} (v)), then $\kappa$ is continuous so that
$\kappa(0) = 1/r$, a contradiction.

\medskip
\textsl{Step 2: $S$ is of class $C^2$.}

Let $r\in (0, r_S)$ be fixed. Let $\eta (s)$, for $s \in (s_1, s_2)$ be a local parametrization of $S$. By Step 1 we know that, for every $s \in (s_1, s_2)$ the contact set $C_r (\eta (s))$ consists exactly of two points, say $\gamma (s)$ and $\widetilde \gamma (s)$. We denote by   by  $\Gamma$ and $\widetilde \Gamma$ the support of the two curves $\gamma (s)$ and $\widetilde \gamma (s)$, for $s \in (s_1, s_2)$; moreover, for $i=1,2$, we set $q _i := \gamma (s_i)$, and $\widetilde q _i := \widetilde \gamma (s_i)$. Let $A$ be the open bounded set delimited by 
the two curves $\Gamma$, $\widetilde \Gamma$, 
and the two line segments $[q _1, \widetilde q_1]$, $[q _2, \widetilde q_2]$. 

Since $S$ is proximally $C^2$ of radius $r_S$, by Proposition \ref{l:prox} (v) we have that $\Gamma$ is
of class $C^2$.
Moreover, for any $y = \gamma (s)\in  \Gamma$, consider the line segment
$y + t \nu _ A(y)$, for $t\in [0, 2r]$.
By construction, the mid-point $p:=y+ r \nu _A (y)$ of such segment lies on $S$, while its extremes 
$y$ and $y + 2 r \nu _A (y)$ coincide precisely with the two elements $\gamma (s)$ and $\widetilde \gamma (s)$ of the contact set $C_r (p) = \partial B _ r (p) \cap \partial S _r $.  
We infer that every point in $A$ has a unique projection onto $\Gamma$. 
Then, by using the facts that $\Gamma$ is of class $C ^2$ and that every point in $A$ has a unique projection onto $\Gamma$, we may argue by using the Inverse Function Theorem exactly as done in the proof of 
\cite[Thm.~6.10]{CMf} 
to obtain that $d_{\Gamma}$ is of class $C^2$ on $A$.
Since, by construction, $S\cap A$ agrees with the level set
$\{d_{\Gamma } = r\}\cap A$, by the Implicit Function Theorem
we conclude that $S$ is of class $C^2$.  \qed

\bigskip
\begin{remark}\label{r:inspection}
By inspection of Step 2 in the above proof, one can easily check that the statement of Theorem \ref{t:c2} can be generalized as indicated in Remark \ref{r:comments} (iii). Indeed,  if $S$ satisfies Definition \ref{d:psm} with $C ^2$ replaced by $C ^ {k, \alpha}$, $C ^ \infty$, or $C ^ \omega$, then $\Gamma$ turns out to be of the same class 
$C ^ {k, \alpha}$, $C ^ \infty$, $C ^ \omega$ (cf.\ Proposition \ref{l:prox} (v)). Then, by following the same proof as above (that is, by localizing the 
argument used in \cite[Thm.~6.10]{CMf}) one  concludes that $S$ is of class $C ^ {k, \alpha}$, $C ^ \infty$, $C ^ \omega$, respectively.
\end{remark}

%%%%%% OLD PROOF  %%%%%%%%%%%%%%%%%%%
% By the previous step and the Jordan Separation Theorem
%we deduce that 
%$\mathbb{R}^2\setminus S_r$ has two connected
%components; 
%let $B$ denote the unbounded one and
%let $A := \mathbb{R}^2 \setminus \overline{B}$. 
%In other words, if $ \Gamma _{\rm in}$ and $\Gamma _{\rm out}$ denote respectively the inner and the outer simple %closed curves such that $\partial S _r =  \Gamma _{\rm in} \cup  \Gamma _{\rm out}$,  
%$A$ is the open bounded simply connected set
%such that $\partial A= \Gamma _{\rm out}$. 

%Since $S$ is proximally $C^k$ of radius $r_S$, by Proposition \ref{l:prox} (v) we have that $A$ is
%of class $C^k$.
%Moreover, for any $y \in \partial A$, consider the line segment
%$$y + t \nu _ A(y) \, \qquad  t\in [0, 2r]\,.$$
%By construction, the mid-point $p:=y+ r \nu _A (y)$ of such segment lies on $S$, while its extremes 
%$y$ and $y + 2 r \nu _A (y)$ coincide precisely with the contact set $C_r (p) = \partial B _ r (p) \cap \partial S _r$.  
%We infer that every point in $S_r$ has a unique projection onto $\partial A$, namely $\Sigma(A) \cap S_r = \emptyset$.  
%Then,  by \cite[Thm.~6.10]{CMf}, 
%we have that 
%$d_{\partial A}$ is of class $C^k$ on $S_r$.
%Since, by construction, $S$ is the level set
%$\{d_{\partial A} = r\}\cap A$, by the Implicit Function Theorem
%we conclude that $S$ is of class $C^k$. (Equivalently, one may argue that $\R ^2 \setminus A$ is proximally $C^k$ of %radius at least $2r$, notice that $S= \partial (\R ^2 \setminus A)_r$, and apply Proposition \ref{l:prox} (v)). 

%%%%%%%%%%%%%%%%%%%%%%%%%%%%%%%%%%
\delete{
Assume by contradiction that $S$ is not a singleton.
Since $S$ is simply connected, by Theorem~\ref{t:c11} 
we deduce that
$S$ is a $1$-dimensional closed differentiable manifold
of class $C^{1,1}$ with boundary.
Let $p$ be an endpoint of $S$, and let $r \in (0, r_S)$ be fixed. 
W.l.o.g.\ we can assume that 
$p=(0,r)$ and that $C_r(p)$ is the semicircumference
lying in $\{x\leq 0\}$ with endpoints
$a = (0,0)$ and $b = (0, 2r)$.
Let us consider a parametrization $\gamma$ of the
connected component of $\partial S_r$ containing $C_r(p)$
as in Remark \ref{r:par}.
For every $s$
we have that
\[
p(s) := \gamma(s) + r \nu(s) \in S
\]
is equal to $\pi _S(\gamma(s))$.
Moreover, there exists $s_0>0$ such that
\[
C_r(p(s)) = \{\gamma(s), \gamma(s) + 2r \nu(s)\}\,,
\qquad \forall s\in (0, s_0).
\]
We remark that, for $s\in (0, s_0)$, both points in $C_r(p(s))$
must lie in the half-plane $\{x > 0\}$.
In particular one has
\[
\xi(s) := 
\gamma_1(s) + 2r \nu_1(s) =
\int_0^s \cos\phi(t)\, dt - 2r\, \sin\phi(s) > 0
\qquad\forall s\in (0, s_0).
\]
Since $\xi(0) = 0$, this inequality yields
\[
\xi'(0) = 1 - 2r\kappa(0) \geq 0,
\]
that is, $\kappa(0) \leq 1/(2r)$.
On the other hand, since $S_r$ is of class $C^2$
(see Proposition~\ref{l:prox} (v)), then $\kappa$ is continuous so that
$\kappa(0) = 1/r$, a contradiction. \qed
}

\bigskip
{\bf Proof of Theorem \ref{forBK}}. Clearly, $S$ is a nonempty compact set. Moreover, it is connected ({\it cf.} Proposition 
\ref{top} (iii)), and it has empty interior (otherwise it could not be $S = \high (\Omega)$). We claim that $S$ is proximally $C ^1$. Indeed, by the equality $S = \Sigma (\Omega)$, for every $x \in \Omega \setminus S$ the set $\pi _{\partial \Omega} (x)$ is a singleton, so that $d _{\partial \Omega}$ is differentiable with
$$d' _{\partial \Omega} (x) = \frac{ x - \pi _{\partial \Omega}(x)} {d_{\partial \Omega} (x) } \, \qquad \forall x \in \Omega \setminus S\,.$$
The above equality shows that $d _{\partial \Omega}$ is actually of class $C ^1$ on the set $\Omega \setminus S$, that is, $\partial \Omega$ is proximally $C ^1$ of radius $\rho _\Omega$. By applying (\ref{ugdist}) in Proposition \ref{l:prox} (with $\partial \Omega$ in place of $S$) and letting $r $ tend to $\rho _\Omega$, we obtain 
\begin{equation}\label{f:reldist}
d _ S (x) = \rho _\Omega - d _{\partial \Omega} (x) \qquad \forall x \in \Omega \setminus S\, .
\end{equation}
Hence $S$ is proximally $C^1$, of radius $r _ S \geq \rho _\Omega$.  Then $S$ satisfies (H1) and we can apply Theorem \ref{t:c11} to deduce that $S$ is either a singleton or a $1$-dimensional manifold of class $C ^{1,1}$.  By (\ref{f:ug}), it readily follows that $\Omega = S_{\rho _\Omega}$. In case $\partial \Omega$ is $C ^2$, the function $d _{\partial \Omega}$ is $C ^ 2$ on $\Omega \setminus S$ \cite[Thm.~6.10]{CMf}. Then by (\ref{f:reldist}) $S$ is proximally $C^2$, and the last part of the statement follows from Theorem \ref{t:c2}. \qed

\bigskip
{\bf Proof of Corollary \ref{cor:lambda}}.

Assume by contradiction that $\high (\Omega)\neq \Cut (\Omega)$. Choose two points $x_1$ and $x_2$, with
$ x_1 \in \high (\Omega)$ and $x_2 \in \Cut (\Omega) \setminus \high (\Omega)$, and let
$y _1 \in \pi _{\partial \Omega} (x_1)$, $y _2 \in \pi _{\partial \Omega} (x_2)$. Then
$$\max _{\overline \Omega} d _{\partial \Omega } = \lambda _\Omega (y _1) > \lambda _\Omega (y _2)\ ,$$
against the assumption $\lambda _\Omega$ constant along the boundary.  \qed

\bigskip

{\bf Proof of Theorem \ref{t:nconvex}}.

Since $\Omega$ is a convex set, the distance function
$d_{\partial\Omega}$ is concave in $\overline{\Omega}$,
hence the set $S$ is convex.
Since $S$ does not contain interior points,
the dimension of $S$ (as a convex set) is less than
or equal to $n-1$, i.e., there exists and affine
subspace $V\subset\R^n$ of dimension $\leq n-1$
such that $S\subset V$. 
Let $p,q\in S$ be two points of maximal distance in $S$, i.e.
\[
|p-q| = \text{diam}(S) := \max\{|z-w|;\ w,z\in S\}.
\]
We remark that the hyperplanes through $p$ and $q$ orthogonal to
$p-q$ are support planes to $S$. 

%belong respectively to the normal cones $T_S(x)$ and $T_S(y)$.

Without loss of generality,  let us assume
that $V = \text{span}\{e_1, \ldots, e_k\}$, $k\leq n-1$, and that
$p = \alpha\, e_1$, $q = -\alpha\, e_1$ for some $\alpha > 0$. 
So we have $\text{diam}(S) = 2\alpha$, and
\begin{equation}\label{f:geoS}
S \subset \big \{x = (x_1, \ldots, x_n):\
|x_1|\leq\alpha,\
x_j = 0\ \forall j=k+1, \ldots, n
\big \}.
\end{equation}
%$\nu(x) = e_1$, $\nu(y) = -e_1$. 

Let us set $W := \text{span}\{e_1, e_n\}$, and let us identify $W$ with $\R ^2$. 
By construction, we have
$$S \cap W = \big \{x= (x_1, x_2):\ x_1\in [-\alpha, \alpha],\ x_2 = 0 \big \}\,.$$
Consider now the convex subset of $\R ^2$ given by
$$A := \Omega\cap W\,.$$
From (\ref{f:geoS}), we infer that the set $A \cap \{ |x_1| \leq \alpha \}$ is given by two line segments 
parallel to $S \cap W$, whereas the set $A \cap \{|x_1| \geq \alpha\}$ is given by two semi-circumferences of radius $\alpha$ centered at $p$ and $q$. Thus $A$ a stadium-like domain, with 
$\Cut(A) = \high(A) = S \cap W\,.$
On the other hand, by the definition of $A$ and the regularity assumption made on $\Omega$, $A$ must have a $C ^2$ boundary.  But the unique stadium-like domain with a $C ^2$ boundary is the disk. So $\alpha = 0$, which means that $S$ has zero diameter, or equivalently is a singleton. 
\qed

%%%%%%%%%%%%%%%%%%%%%%%%%%%%%%%%%%%%%%%%%%%%%%%%%%%%%%%%%%%%%%%%%%%%%%%%%%%%%%%%%%%%
%\bibliographystyle{mybst}
%\bibliography{Ricerca,Graziano}
%\bibliographystyle{/Ga/BibTeX/mybst}
%\bibliography{/Ga/BibTeX/Graziano,/Ga/BibTeX/Ricerca,/Ga/BibTeX/Physics}
%\end{document}
%%%%%%%%%%%%%%%%%%%%%%%%%%%%%%%%%%%%%%%%%%%%%%%%%%%%%%%%%%%%%%%%%%%%%%%%%%%%%%%%%%%%

\def\cprime{$'$}
\providecommand{\bysame}{\leavevmode\hbox to3em{\hrulefill}\thinspace}
\providecommand{\MR}{\relax\ifhmode\unskip\space\fi MR }
% \MRhref is called by the amsart/book/proc definition of \MR.
\providecommand{\MRhref}[2]{%
  \href{http://www.ams.org/mathscinet-getitem?mr=#1}{#2}
}
\providecommand{\href}[2]{#2}

\end{document}